\newtheorem{theorem}{Theorem}[section]
\newtheorem{proposition}[theorem]{Proposition}
\newtheorem*{claim*}{Claim}
\newtheorem{corollary}[theorem]{Corollary}
\theoremstyle{definition}
\newtheorem{definition}[theorem]{Definition}
\newtheorem{example}[theorem]{Example}
\newtheorem{remark}[theorem]{Remark}
\newcommand{\R}{\mathbf{R}} 
\newcommand{\Z}{\mathbb{Z}}
\DeclareMathOperator{\lk}{lk}
\newcommand{\lks}{\lk_\Sigma}
\DeclareMathOperator{\Conf}{Conf}
\DeclareMathOperator{\Fr}{Fr}
\newcommand{\symmhomology}[1]{\mathbf{S}(#1)}
\begin{document}

\title{Concordance of spatial graphs}

\author[E. Lappo]{Egor Lappo}
\address{Department of Mathematics, Stanford University}
\email{elappo@stanford.edu}

{\small\begin{abstract}
We define smooth notions of concordance and sliceness for spatial graphs. We prove that sliceness of a spatial graph is equivalent to a condition on a set of linking numbers together with sliceness of a link associated to the graph. This generalizes the result of Taniyama for $\theta$-curves.

\end{abstract}}

\maketitle

\section{Introduction}

Spatial graphs are commonly defined as embeddings of compact one-dimensional CW-complexes into a 3-sphere. Spatial graphs having special abstract topology, such as \emph{$\theta$-curves}, appear naturally in knot theory in the study of strongly invertible knots \cite{MR1102258}, DNA replication \cite{buck2017unknotting, odonnol2018}, and other topics. 

Several isotopy invariants of spatial graphs were proposed, based, among others, on combinatorics of planar diagrams \cite{bao2020alexander,kauffman1989invariants}, the Alexander module \cite{litherland1989alexander}, Heegaard Floer homology \cite{bao2014floer}, instanton homology \cite{krmrw}. Many of them are extensions of isotopy invariants for knots. Knot concordance is another equivalence relation on the set of all knots. Two knots $K$ and $K'$ are said to be \emph{smoothly concordant} if there is a smooth annular cobordism $f: K\times I \to S^3 \times I$ such that $f(x,0) = K$ and $f(x,1) = K'$. Slice knots are knots concordant to the unknot.
A natural question then is to extend the notion of concordance equivalence of knots to spatial graphs and propose concordance invariants. 

In this paper, we produce a way to reduce the question of whether a given spatial graph is slice to sliceness of a certain link that can be obtained from the graph. This work can be seen as an extension of Taniyama's work \cite{taniyama1993cobordism} from sliceness of $\theta$-curves to general spatial graphs. We note that concordance equivalence of knots is different depending on whether one is working with locally flat or smooth maps. We work in the smooth category, while Taniyama worked in the locally flat category. Analogues of our results could equally well be proven in the locally flat setting. 

We begin by defining a spatial graph as an injective map $f: \Gamma \to S^3$ from a finite one-dimensional CW-complex to $S^3$ such that $f$ is smooth on 1-cells (edges) of $\Gamma$ and in the neighborhood of each 0-cell (vertex), the image has a fixed form, with all edges lying in the same plane (Definition~\ref{def:spatial_graph}). The image of $f$ is denoted by $G = f(\Gamma)$. We proceed to define three equivalence relations on spatial graphs: \emph{isotopy}, which reflects the equivalence most widely used in the community, \emph{rigid isotopy}, in which a neighborhood of each vertex is preserved, and \emph{concordance}, in which two spatial graphs $G$ and $H$ are equivalent if there is an identity cobordism $G\times I$ in $S^3\times H$ from $G$ to $H$ (Definition~\ref{def:graph_equivalences}). 
This cobordism is required to satisfy certain smoothness conditions, being what we call a rigid embedding.
By analogy with knots, we define \emph{slice spatial graphs} as spatial graphs that are concordant to a \emph{planar} one, that is, to a graph embedded in $S^2\subset S^3$. 

We proceed to define a \emph{framing} $\Sigma$ of a spatial graph $G$ as an oriented surface that deformation retracts onto $G$. Equivalences of graphs naturally extend to the framed context (Definition~\ref{def:framed_graph_equivalences}).
Constituent knots, which are simple cycles in $G$, can then be ``pushed-off'' in a positive direction with respect to the framing surface $\Sigma$. Linking numbers of constituent knots obtained with the help of such push-offs are then shown to be invariant under framed concordances of spatial graphs (Proposition~\ref{prop:lk_no}) and such linking numbers vanish for framed slice spatial graphs. 

We then give an algebraic description of linking numbers of constituent knots that allows us to tell whether a given spatial graph can be framed slice even when no framing is given. More precisely, to each abstract graph topology $\Gamma$ we associate an abelian group that encodes all possible framings across all embeddings of $\Gamma$, modulo twists on edges. We call it the \emph{module of framings} $\Fr(\Gamma)$.
For each spatial graph $G$ having abstract topology $\Gamma$, we have an element $\mathbf K(G) \in \Fr(\Gamma)$. We show the following result:
\begin{theorem}\label{thm:lk_application}
    Let  $G$ and $H$ be spatial graphs.
    If there is a concordance between $G$ and $H$, then $\mathbf K(G) = \mathbf K(H)$.
\end{theorem}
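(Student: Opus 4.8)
The plan is to reduce the statement to the framed setting, where Proposition~\ref{prop:lk_no} already supplies invariance of the linking numbers of constituent knots. Recall that $\mathbf K(G)$ is by construction the class in $\Fr(\Gamma)$ determined by the collection of linking numbers of the positive push-offs of the constituent cycles of $G$ taken with respect to \emph{any} framing; passing to $\Fr(\Gamma)$ is precisely what absorbs the ambiguity coming from the choice of framing, since changing a framing alters these linking numbers only through twists along edges, and those are among the relations defining $\Fr(\Gamma)$. Consequently, to prove $\mathbf K(G)=\mathbf K(H)$ it is enough to find framings $\Sigma_G$ of $G$ and $\Sigma_H$ of $H$ together with a framed concordance between $(G,\Sigma_G)$ and $(H,\Sigma_H)$: Proposition~\ref{prop:lk_no} then forces the two linking-number collections to agree, and hence their classes in $\Fr(\Gamma)$, namely $\mathbf K(G)$ and $\mathbf K(H)$, to coincide.

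To manufacture such a framed concordance, fix an arbitrary framing $\Sigma_G$ of $G$ and let $C\colon \Gamma\times I\hookrightarrow S^3\times I$ be the given rigid concordance, so that the image of $C|_{\Gamma\times\{0\}}$ is $G$ and that of $C|_{\Gamma\times\{1\}}$ is $H$. I would then show that $\Sigma_G$ extends to a framing $\widehat\Sigma$ of the whole cobordism $C$; its restriction to the top level is a framing $\Sigma_H$ of $H$, and $\widehat\Sigma$ is, by construction, a framed concordance from $(G,\Sigma_G)$ to $(H,\Sigma_H)$. The existence of $\widehat\Sigma$ is an extension/obstruction problem: the band data defining a framing lives over a regular neighborhood of the underlying $1$-complex, and because $C$ is a rigid embedding of $\Gamma\times I$ it deformation retracts onto $G$ and carries the prescribed planar local model near every vertex, so the data demanded along $C$ is homotopy equivalent to the data already chosen on $G$, extends over the product with no obstruction, and remains globally coherent at the vertices exactly by virtue of the rigidity hypothesis. (Restricted to a single constituent cycle this is nothing more than the standard fact that a framing of a knot extends across a concordance annulus, the normal bundle of the annulus in $S^3\times I$ being pulled back from a boundary circle.)

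With $\widehat\Sigma$ in hand the argument closes immediately: it is a framed concordance between $(G,\Sigma_G)$ and $(H,\Sigma_H)$, Proposition~\ref{prop:lk_no} yields equality of every constituent linking number, and reducing modulo the relations of $\Fr(\Gamma)$ gives $\mathbf K(G)=\mathbf K(H)$. The only step that is not pure bookkeeping is the extension of the framing over the concordance — checking that a framing of $G$ can be transported along $C$ to a framing of $H$ while staying globally consistent, and in particular consistent at the vertices; I expect that to be where the rigidity of the concordance is genuinely used, and it is the part of the proof I would write out most carefully.
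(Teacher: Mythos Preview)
Your proposal is correct and follows essentially the same route as the paper: fix an arbitrary framing $\Sigma_G$, extend it across the concordance to obtain a framed concordance to some $(H,\Sigma_H)$, invoke Proposition~\ref{prop:lk_no} to get $\widetilde{\mathbf K}(G,\Sigma_G)=\widetilde{\mathbf K}(H,\Sigma_H)$, and pass to the quotient $\Fr(\Gamma)$. The ``only non-bookkeeping step'' you single out---extending the framing over the rigid concordance---is exactly what the paper has already isolated and proved as Proposition~\ref{prop:framing_extend}, so in the paper's write-up the whole argument collapses to a two-line citation of that proposition together with Proposition~\ref{prop:lk_no}.
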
 
This gives us a concrete algebraic condition on the framings that can be calculated through linking numbers.

A collection of circles embedded in a framing surface $\Sigma$ of $G$ is called a \emph{link pattern}. For a special class of link patterns, called \emph{fundamental link patterns}, we prove that as long as the linking number condition vanishes, sliceness of a fundamental link pattern is equivalent to sliceness of the spatial graph $G$:
\begin{theorem}\label{thm:main}
    Let $G$ be a spatial graph with abstract topology $\Gamma$. Let $\{K_i\}$ be its constituent knots constructed from a maximal tree as in Section~\ref{subsec:linking_homology}, and $\Sigma$ an arbitrary framing of $G$. Then, $G$ being slice is equivalent to the combination of the following two conditions:
    \begin{enumerate}[label=(\roman*)]
        \item The images of push-offs of constituent knots are zero in the module of framings of $\Gamma$, $\mathbf K(G) = 0 \in \Fr(\Gamma)$;
        \item if (i) is true, then there is a framing $\Sigma_0$ such that each for push-off we have $[K_i^+] = 0 \in H_1(S^3- G)$. The second condition then is: a fundamental link pattern in $\Sigma_0$ is slice.
    \end{enumerate}
\end{theorem}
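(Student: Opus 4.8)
The plan is to prove the equivalence by establishing the two implications separately, after first recording a lemma that extracts the framing $\Sigma_0$ from condition (i). For that lemma: since $\Fr(\Gamma)$ is by construction the group of framings of $\Gamma$ modulo twists along edges, and $\mathbf K(G)$ records the push-off homology classes $[K_i^+]$, the vanishing $\mathbf K(G)=0$ means precisely that some sequence of edge-twists turns the given framing $\Sigma$ into a framing $\Sigma_0$ with every $[K_i^+]=0\in H_1(S^3-G)$; this also shows that the validity of (ii) is independent of which such $\Sigma_0$ we pick, since any two differ by edge-twists that do not change the isotopy type of the curves of a link pattern. Granting this, condition (i) is forced on any slice $G$: a slice graph is concordant to a planar graph $H\subset S^2$, Theorem~\ref{thm:lk_application} gives $\mathbf K(G)=\mathbf K(H)$, and the obvious framing of $H$ inside $S^2$ makes all push-offs bound, so $\mathbf K(H)=0$.

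For the \emph{only if} direction it remains to prove (ii) assuming $G$ is slice. Extend the concordance $G\leadsto H$ to a framed concordance carrying $\Sigma_0$ to a framing $\Sigma_1$ of $H$ (framed concordances exist over any prescribed framing, by the discussion around Definition~\ref{def:framed_graph_equivalences}). By the mechanism behind Proposition~\ref{prop:lk_no} the push-off homology classes are transported correctly, so $\Sigma_1$ is again a homologically trivial framing. A framed concordance restricts to a concordance of fundamental link patterns, because these patterns are defined naturally from the framing surface; hence the fundamental link pattern of $(G,\Sigma_0)$ is concordant to that of $(H,\Sigma_1)$. Finally one checks that the fundamental link pattern of a planar graph equipped with a homologically trivial framing is a slice link (indeed it can be isotoped into $S^2$ and is a trivial link). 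A link concordant to a slice link is slice, so the fundamental link pattern in $\Sigma_0$ is slice, which is (ii).

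For the \emph{if} direction, assume (i) and (ii): fix $\Sigma_0$ as in the lemma with all $[K_i^+]=0$, and let $\Delta=\Delta_1\sqcup\cdots\sqcup\Delta_m\subset D^4$ be disjoint slice disks bounded by the fundamental link pattern $L\subset\Sigma_0$. The goal is a rigid embedding $G\times I\hookrightarrow S^3\times I$ restricting to $G$ at $0$ and to a planar graph at $1$. Push $G$ and its framing surface $\Sigma_0$ slightly into $S^3\times I$; then use the slice disks $\Delta_i$ to compress the pushed-in copy of $\Sigma_0$ along $L$, performing all the surgeries simultaneously and disjointly inside the $4$-dimensional collar. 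The link pattern $L$ is chosen so that compressing $\Sigma_0$ along it yields a surface that deformation retracts onto a planar graph, and the homological hypothesis $[K_i^+]=0$ guarantees that the compressed surface can be isotoped into standard planar position; reading off its spine at the top of the cobordism produces the desired planar graph, and the trace of the compression is the required concordance of $G$.

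The main obstacle is this last construction: converting ``$L$ bounds disjoint slice disks in $D^4$'' into ``$\Sigma_0$ compresses through $S^3\times I$ to a planar framing surface whose spine sweeps out an embedded, vertex-rigid $G\times I$.'' Two points require care. First, one must verify combinatorially that cutting $\Sigma_0$ along the \emph{fundamental} link pattern produces a planar framing surface of $\Gamma$ and not merely some lower-genus surface — this is exactly where the definition of fundamental link pattern and the hypothesis $[K_i^+]=0$ are used, the homological condition ruling out the configurations where the cut surface would still be knotted or linked. Second, one must realize the surgeries inside the cobordism so that the spine stays embedded throughout and keeps its prescribed rigid form near each vertex, which forces the disks $\Delta_i$ to be pushed in and isotoped to meet the pushed-in surface cleanly along $L$ and nowhere else. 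I expect the bulk of the technical work of the proof to be concentrated here, with the remaining steps being formal consequences of Theorem~\ref{thm:lk_application}, Proposition~\ref{prop:lk_no}, and the definition of $\Fr(\Gamma)$.
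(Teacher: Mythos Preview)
Your overall two-implication structure matches the paper's, but both halves diverge from the paper's argument, and the backward half has a genuine gap.

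\textbf{Forward direction.} You extend the framing from $(G,\Sigma_0)$ toward the planar graph, obtaining $(H,\Sigma_1)$, and then need the fundamental link pattern in $\Sigma_1$ to be slice. Your assertion that this pattern ``can be isotoped into $S^2$'' is not justified: $\Sigma_1$ need not be the blackboard framing even when $\widetilde{\mathbf K}(H,\Sigma_1)=0$, and a twisted framing surface does not lie in $S^2$. The paper avoids this by extending the framing in the \emph{opposite} direction---starting from the blackboard framing of the planar graph and transporting it to $G$ via Proposition~\ref{prop:framing_extend}---so that Theorem~\ref{thm:link_pattern} applies immediately to the resulting $\Sigma_0$. (Your side remark that the validity of (ii) is independent of the choice of $\Sigma_0$ because edge-twists ``do not change the isotopy type of the curves of a link pattern'' is also incorrect: a full twist on an edge certainly changes the ambient link type of any pattern component running over that edge.)

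\textbf{Backward direction.} The paper's proof here is a one-line citation of Theorem~\ref{thm:fnd_link_pattern}: condition~(ii) is precisely its hypothesis, and its conclusion is that $(G,\Sigma_0)$ is framed slice. You do not invoke that theorem but instead attempt a direct construction, compressing the pushed-in $\Sigma_0$ along the slice disks $\Delta_i$. This does not work as described. Compressing $\Sigma_0$ along a fundamental link pattern kills all of $H_1(\Sigma_0)$, so the resulting surface has first Betti number zero and deformation retracts onto a tree, not onto a copy of $\Gamma$; there is then no spine to read off as the far end of a concordance, and no embedded $\Gamma\times I$ has been produced. The homological condition $[K_i^+]=0$ does not rescue this. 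The paper's proof of Theorem~\ref{thm:fnd_link_pattern} uses a genuinely different mechanism: it attaches $k-1$ bands \emph{within} $\Sigma$ so as to split the spatial graph into $k-1$ circle components isotopic to $L_1,\dots,L_{k-1}$ together with one residual component still homeomorphic to $\Gamma$ sitting in a tubular neighborhood of $L_k$; the given link concordance is then applied componentwise and the circles capped off. Band attachment and capping are legitimate Morse moves for a graph concordance and preserve the abstract graph type throughout, which your surface-compression picture does not.
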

This generalizes the result of Taniyama \cite{taniyama1993cobordism} for $\theta$-curves. In that case, condition (i) is vacuous and we only have condition (ii). 

\subsection*{Acknowledgements}

The author would like to thank Ciprian Manolescu, for suggesting the problem, as well as for his immense support and patience during writing this paper.

\section{Definitions}\label{sec:def}

In this section we recall some topological preliminaries and introduce definitions for spatial graphs and their concordance.

\subsection{Spatial graphs}\label{subsec:spatial_graphs}

A \emph{graph} $\Gamma$ is a finite one-dimensional CW-complex. The 0-cells of $\Gamma$ are called \emph{vertices}, and 1-cells are called \emph{edges}. The number of edges adjacent to the vertex is called the order of the vertex. In this paper, all graphs are assumed to have no vertices of orders 1 or 2, except for the case of links, which we see as spatial graphs having connected components with a single edge and a single vertex of order two.

An \emph{orientation} of a graph $\Gamma$ is a choice of a ``direction'' for each edge of $\Gamma$, and a labelling of a graph is a map associating a unique label to each cell of $\Gamma$. In this paper, all graphs are assumed to be oriented and labeled.

A \emph{spatial graph} is usually taken to be a graph $\Gamma$ together with an injective map $f: \Gamma \hookrightarrow S^3$ having some additional properties. The simplest example would be to require $f$ to be continuous or of class $C^k$ on the \emph{edges} of $\Gamma$, as in \cite{gulliver2008total}. 
However, these conditions are insufficient for our purposes, since pathological phenomena similar to \emph{wild knots} can occcur in the neighborhood of a vertex. We give a definition of a spatial graph in the context of smooth topology.

\begin{definition}\label{def:spatial_graph}
    A \emph{spatial graph} $G = (\Gamma, f)$ is an abstract graph $\Gamma$ together with an injective map $f: \Gamma \hookrightarrow S^3$ such that 
    \begin{enumerate}[label=(\alph*)]
        \item  $f$ is a smooth embedding on the 1-cells (edges) of $\Gamma$, and
        \item for every vertex $v$ of $\Gamma$, there exists a neighborhood $U$ of $f(v)$ such that $U\cap f(\Gamma)$ is diffeomorphic to $(D^3, X_n)$ with $n \geq 3$, shown in Figure~\ref{fig:x_n_nbhd}.
    \end{enumerate}
    Here, $X_n$ is a cone on $n$ points spaced regularly on the unit circle in the plane.
\end{definition}

\begin{figure}
    \centering
    \includegraphics[width=0.3\textwidth]{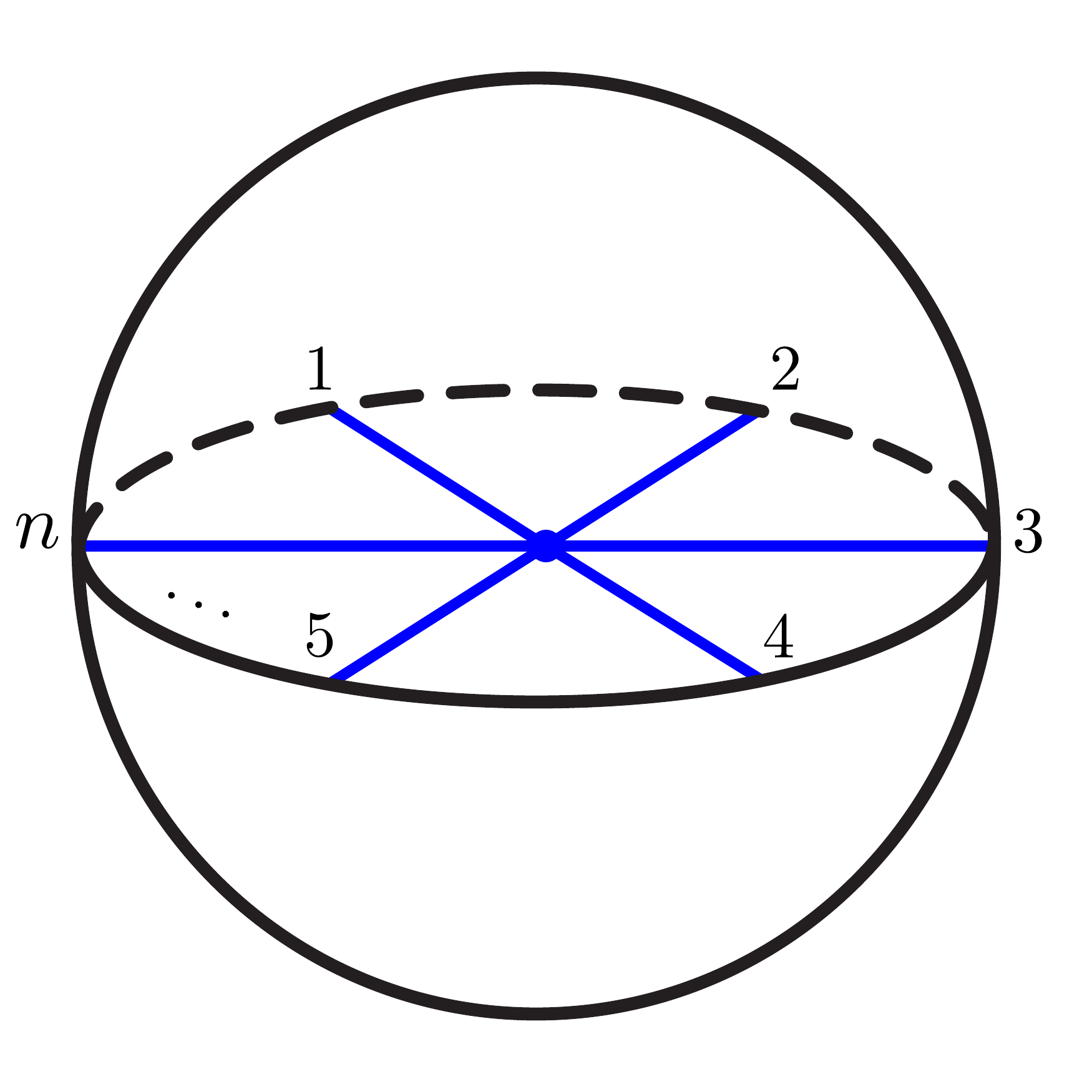}
    \caption{The pair $(D^3, X_n)$ from Definition~\ref{def:spatial_graph}, with $Y_n$ shown in blue. Notice that $X_n$ lies in a plane within $B^3$.}
    \label{fig:x_n_nbhd}
\end{figure}

\begin{remark}
In the literature, several additional conditions are often introduced on spatial graphs, such as: \emph{no sources or sinks}, so that each vertex is adjacent to both incoming and outgoing edges; \emph{transverse orientation}, so that at each vertex of $G$ there is a small embedded disk $D_\epsilon$ separating the incoming and outgoing edges; \emph{balanced coloring}, which is an asignment of a non-negative integer to each edge such that at each vertex the sum of integers on incoming edges is equal to the sum on outgoing ones, and so on (see \cite{bao2014floer,bao2020alexander,vance2020tau}). These conditions are usually required for algebraic invariants to work, and here we do not make use of them.
\end{remark}

The subtlety in Definition \ref{def:spatial_graph} is highlighted in the discussion of \emph{spatial graph equivalences}. While a multitude of equivalence relations on spatial graphs already exists in the literature \cite{taniyama1994cobordism}, here we provide new definitions that are suitable for the study of four-dimensional phenomena in the smooth context. First, we give some auxiliary definitions.

\begin{definition}\label{def:map_properties}
    Let $I = [0,1]$, $G = (\Gamma, g)$ and $H = (\Gamma, h)$ be spatial graphs, and $f$ be a map $f: \Gamma\times I \to S^3\times I$. We say that
    \begin{enumerate}[label=(\alph*)]
        \item $f$ is \emph{from $g$ to $h$} if there exists $\varepsilon > 0$ such that $f(x,t) = (g(x),t)$ for all $t \in [0, \varepsilon)$ and $f(x,t) = (h(x),t)$ for all $t \in (1-\varepsilon, 1]$. 
        \item $f$ is \emph{level-preserving} if for each $t \in I$ there is a map $f_t: \Gamma\to S^3$ such that $f(g,t) = (f_t(g), t)$. 
        \item $f$ is a \emph{rigid embedding} if every point of the image $f(\Gamma)$ has a neighborhood diffeomorphic to either $(D^4,D^2)$ or $(D^3\times (0,1), X_n\times (0,1))$.
    \end{enumerate}
\end{definition}

Then, the equivalences are defined as follows.

\begin{definition}\label{def:graph_equivalences}
    \begin{enumerate}[label=(\alph*)]
        \item Let $\Conf_n(S^2) = \big\{\{x_1,\ldots, x_n\} \subset S^2 \mid x_i \neq x_j\big\}$ be the configuration space of $n$ points on a sphere and $\gamma: I \to \Conf_n(S^2)$ be a smooth map. We define $C_\gamma$ to be the image of taking cones on $\gamma(t)$, i.e. 
        \[
            C_\gamma = \bigcup_{t, u \in [0,1]} \{t\} \times \{u\cdot \gamma(t)\} \subset I\times D^3.    
        \]

        Then, we say that $G$ and $H$ are \emph{isotopic} if there exists a level-preserving map $f: \Gamma\times I \rightarrow S^3\times I$ smooth on the 2-cells of $\Gamma\times I$ and such that for all $t\in (0,1)$, all $v\in V(\Gamma)$, there exists a neighborhood $U$ of $f(v,t)$ and a level-preserving diffeomorphism $U\to C_\gamma$ for some $\gamma(t)$ as above.

        \item Two spatial graphs $G$ and $H$ are \emph{rigidly isotopic} if there exists a level-preserving rigid embedding $f: \Gamma\times I \rightarrow S^3 \times I$ from $G$ to $H$.
        \item Two spatial graphs $G$ and $H$ are \emph{concordant} if there exists a rigid embedding $\Gamma\times I\rightarrow S^3 \times I$ from $G$ to $H$.
    \end{enumerate}
\end{definition}

Intuitively, Definion \ref{def:graph_equivalences}(a) allows for arbitrary movement of edges at a vertex during the isotopy, while restricting the behavior so that pathological phenomena like wild knots or infinite braiding do not occur. Definition~\ref{def:graph_equivalences}(b) restricts the isotopy further, requiring that a small neighborhood of each vertex is fixed during the isotopy. 

\subsection{Planar projections}\label{subsec:planar_proj}

To each spatial graph one can associate a planar projection consisting of a set of vertices, arcs, and crossings. Planar projections of spatial graphs can be related via a sequence of \emph{Reidemeister moves} \cite{kauffman1989invariants, mellor2018invariants}, as presented in Figure~\ref{fig:graph_reidemeister}. For the sake of visual clarity, in the figures the angles between the edges at a vertex are not drawn equal.

The relationship between planar projections and our definitions in Section~\ref{subsec:spatial_graphs} is contained in the following proposition.
\begin{proposition}
    \begin{enumerate}[label=(\alph*)]
        \item Two spatial graphs are isotopic if and only if they possess planar projections related to each other via a sequence of Reidemeister moves as in Figure~\ref{fig:graph_reidemeister}.
        \item Two spatial graphs are rigidly isotopic if and only if they possess planar projections related to each other via a sequence of Reidemeister moves (I)-(V), not involving the move (VI).
    \end{enumerate}
\end{proposition}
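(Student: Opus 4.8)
The plan is to follow the standard proof of the Reidemeister theorem, upgraded to the graph setting: the ``if'' direction is realized one move at a time, and the ``only if'' direction is a general position argument applied to the projected movie of an isotopy. Fix throughout a point $\infty\in S^3$ missed by all the images in sight, identify $S^3\setminus\{\infty\}$ with $\R^3$, and let $p\colon\R^3\to\R^2$ be a linear projection whose direction is chosen generically, which is legitimate since all the relevant images are $2$-complexes.

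\textbf{If direction.} Each Reidemeister move of Figure~\ref{fig:graph_reidemeister} is supported in a small ball $B\subset S^3$ meeting $G$ in a standard tangle (moves (I)--(III)) or in a standard cone neighborhood of one vertex (moves (IV)--(VI)). For each move one writes down an explicit level-preserving map $\Gamma\times I\to S^3\times I$ that is the identity outside $B\times I$, performs the indicated local modification inside, and interpolates smoothly in $t$. For moves (I)--(V) the modification is built from the classical tangle moves together with rigid motions of the planar star at a vertex, and leaves the combinatorial arrangement at each vertex unchanged up to such a motion; hence the map is a level-preserving rigid embedding and realizes a rigid isotopy (a fortiori an isotopy). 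For move (VI) the projected star at a vertex changes by a reordering of edges not induced by a rigid motion of the star; we realize it by choosing a smooth path $\gamma$ in $\Conf_n(S^2)$ effecting that reordering and taking $C_\gamma$ of Definition~\ref{def:graph_equivalences}(a) as the local model, which meets the hypotheses of Definition~\ref{def:graph_equivalences}(a) but not the rigidity of (b). Concatenating these elementary isotopies realizes any finite sequence of Reidemeister moves, and a rigid isotopy whenever move (VI) never occurs.

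\textbf{Only if direction.} Let $f\colon\Gamma\times I\to S^3\times I$ be an isotopy from $G$ to $H$ with slices $f_t$, equal to $g$ for $t<\varepsilon$ and to $h$ for $t>1-\varepsilon$; after a preliminary small isotopy we may assume $g$ and $h$ already project to regular diagrams. The heart of the argument is to perturb $f$, supported in $\Gamma\times(\varepsilon,1-\varepsilon)$ and \emph{within the class of admissible isotopies}, so that the movie $t\mapsto p\circ f_t$ is generic: for all but finitely many $t$ it is a regular diagram of $G$ (transverse double points of edge interiors only, no double point at a vertex image, $n$ distinct projected rays at each vertex), and at each of finitely many exceptional times exactly one codimension-one degeneracy occurs --- a triple point; a self-tangency of two edge strands; a vertical tangency of an edge in the movie; a vertex image sweeping across a strand; or a change of the projected edge-star at one vertex. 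Existence of such a perturbation is a parametrized transversality statement: the degenerate configurations form a positive-codimension stratified subset of the appropriate jet space, and one applies the Thom transversality theorem to the family $\{f_t\}$, relative to a collar of $V(\Gamma)\times I$ on which the cone model of Definition~\ref{def:spatial_graph} is prescribed. Once the movie is generic, between consecutive exceptional times the diagram is constant up to planar isotopy, and crossing an exceptional time changes it by exactly one Reidemeister move: a triple point, self-tangency and vertical tangency give (I)--(III) as classically; a vertex image crossing a strand gives one of the vertex moves (IV)--(V); and a change of the projected edge-star gives (VI), or a move among (IV)--(V) when the reordering is induced by a rigid motion of the star. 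If moreover $f$ is a rigid isotopy, Definition~\ref{def:graph_equivalences}(b) forces the local model at each vertex to be a product $X_n\times(0,1)$, so the only star changes that can appear are those induced by rigid motions of the planar star; hence move (VI) never occurs and (I)--(V) suffice. This proves (a) and (b).

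\textbf{Main obstacle.} The delicate point is that transversality must be achieved \emph{inside} the class of admissible isotopies: $f$ cannot be perturbed freely, since the perturbation must stay level-preserving, smooth on the $2$-cells, and carry the prescribed cone model near every vertex at every time, and in the rigid case a product model. This forces a separate analysis in a collar of $V(\Gamma)\times I$, where the relevant local singularity theory is that of one-parameter families of projected cones rather than of immersed arcs; one must verify that its codimension-one strata are precisely the strand-over-vertex degeneracies and the edge-star reorderings (all by rigid motions of the star in the rigid case), with nothing else appearing and nothing of higher codimension forced. Away from the vertices the argument reduces to the classical Reidemeister general position analysis and is routine by comparison.
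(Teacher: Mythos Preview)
Your proposal is correct and follows the standard transversality/general-position proof of Reidemeister-type theorems, carefully adapted to the cone models at vertices in Definitions~\ref{def:spatial_graph} and~\ref{def:graph_equivalences}. You have also honestly flagged the one genuine subtlety: the perturbation to generic position must stay inside the class of admissible (level-preserving, cone-modelled, and in case (b) rigid) isotopies, which requires a separate local analysis near $V(\Gamma)\times I$.

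The paper, by contrast, does not give a proof at all. It declares (a) to be ``widely known,'' asserts that (b) follows from the observation that moves (I)--(V) leave a small neighborhood of each vertex invariant, and refers the reader to Kauffman \cite{kauffman1989invariants} (Theorem~2.1 and Section~III) for the trivalent and $4$-valent cases. So your write-up is strictly more than what the paper provides: it is essentially a sketch of what the cited argument must contain, extended to the precise smooth/rigid definitions introduced here. The one point on which the paper is slightly more explicit than you is the \emph{reason} (b) holds---namely that rigidity is exactly the condition preserved by (I)--(V) and broken by (VI)---which you establish via the product model $X_n\times(0,1)$, matching the paper's observation.
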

Statement (a) is widely known. Part (b) has a similar proof, which follows from an observation that moves (I)-(V) leave a small neighborhood of each vertex invariant, as required by Definition~\ref{def:graph_equivalences}(b) (see Theorem~2.1 in \cite{kauffman1989invariants} for the proof of part (a) for trivalent graphs, and Section~III of the same paper for the proof of (b) for $4$-valent graphs).

\begin{figure}
    \centering
    \includegraphics[width=0.8\textwidth]{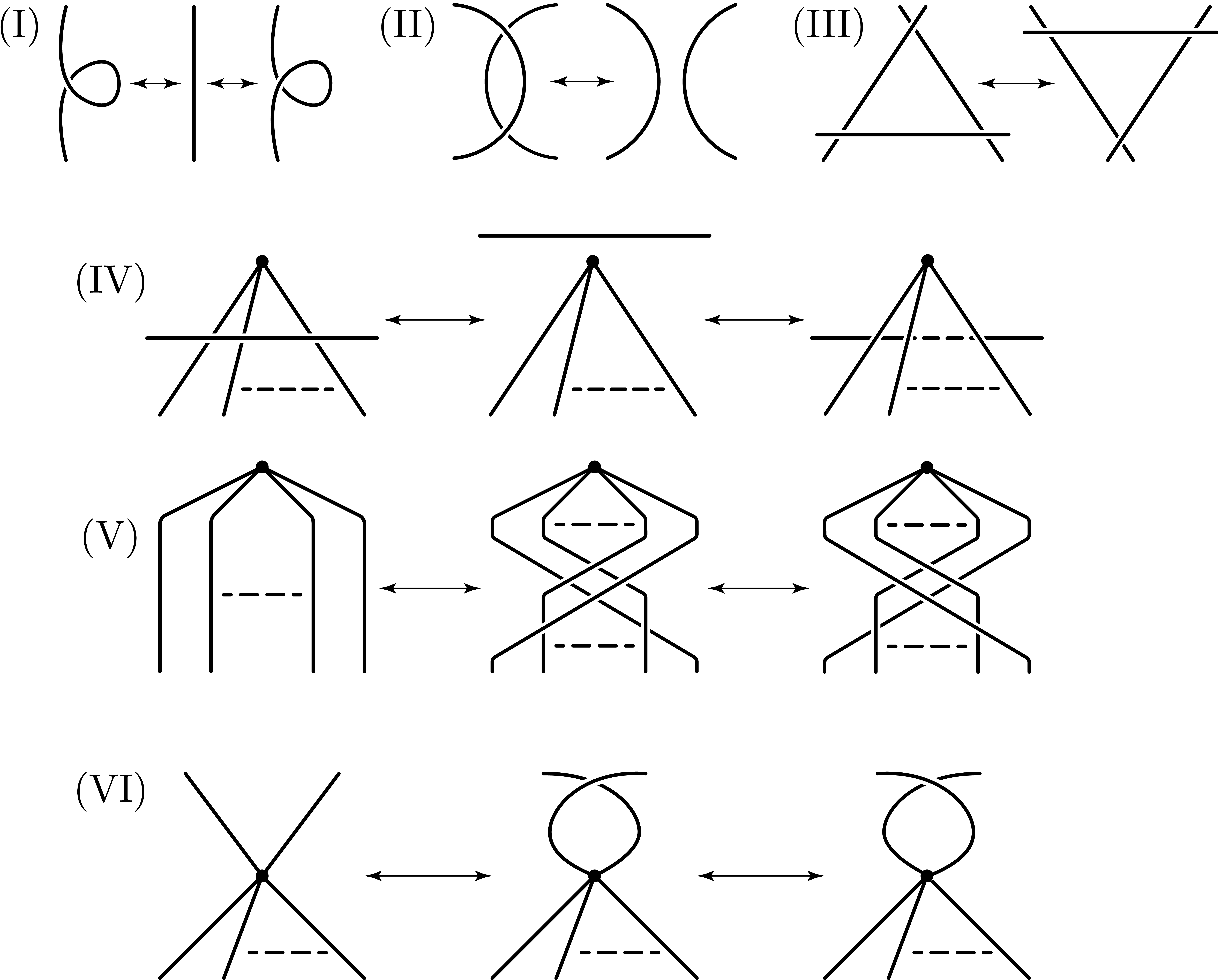}
    \caption{Reidemeister moves for spatial graphs. }
    \label{fig:graph_reidemeister}
\end{figure}

\subsection{Graph concordance}\label{subsec:conc}

The main focus of this paper is \emph{concordance} of spatial graphs, which we study in a smooth context. 
%Let $\cg$ be the set of equivalence classes of spatial graphs under the concordance relation. This set can be presented a disjoint union over the set of distinct abstract graph topologies: 
% \begin{equation}
%     \cg = \coprod_{\text{abstract graph $\Gamma$}} \cg(G).
% \end{equation}

% Given two spatial graphs $G$ and $H$, and a choice of edges $v \in G$, $w \in H$, one can define \emph{connected sum} $G\sharp H$ as follows: remove a small ball from a 3-sphere containing $G$ such that it meets $v$, and similarly for $H$. Then, identify the resulting boundary spheres such that their intersections with $G$ and $H$ match up and respect orientation of edges. Just as in the case of knots, it is easy to see that the connected sum operation descends on $\cg$. Moreover, the unit of this operation is the unknot, viewed as a spatial graph. Note that, because in general connected sum $G\sharp H$ can have more vertices than $G$, the connected sum operation in general does not have an inverse.

\begin{definition}\label{def:planar_slice}
    \begin{enumerate}[label=(\alph*)]
        \item A spatial graph with an embedding into a standard $S^2 \subset S^3$ is called \emph{planar}. Equivalently, a planar spatial graph is a spatial graph having a planar projection with no crossings.
        \item A spatial graph concordant to a planar spatial graph is called \emph{slice}.
    \end{enumerate}
\end{definition}

Note that every abstract planar graph has a unique isotopy class of embeddings into $S^3$ \cite{mason1969}. % Further, note that connected sum of planar graphs is also planar. it can be shown by defining graph genus $g(G)$ to be the minimal genus of a surface in which $G$ can be embedded without self crossings, then observing that $g(P) = 0$ for planar graphs $P$ and that $g(P\sharp Q) \leq g(P) + g(Q)$.

Immediately from Definition~\ref{def:planar_slice} we get the first obstruction to a spatial graph being slice: the case when the abstract graph $\Gamma$ does not have a planar embedding at all. By Kuratowski's theorem, this is true if and only if $\Gamma$ contains a subgraph $\Gamma' \subset \Gamma$ that is a subdivision of $K_5$ (the complete graph on five vertices) or $K_{3,3}$ (the complete bipartite graph on six vertices).

A simple cycle in a spatial graph can be seen as a piecewise-smooth closed curve in $S^3$. Recalling that evey piecewise smooth curve can be smoothed, we make the following definition.
\begin{definition}\label{def:constituent_knot}
    A knot obtained by smoothing a simple cycle in a spatial graph $G$ is called a \emph{constituent knot} of $G$. 
\end{definition}
Every constituent knot of a planar graph is an unknot. To make the presentation more straightforward, we assume that in a given (abstract) graph $\Gamma$, all of its constituent knots $K_i$ are oriented and their orientation is fixed. 

Using constituent knots it is possible to formulate the following condition on sliceness of spatial graphs.
\begin{proposition}\label{prof:const_knot_slice}
    Every constituent knot $K$ of a slice spatial graph $G$ is slice.
\end{proposition}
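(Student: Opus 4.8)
The plan is to show that a concordance of the spatial graph $G$ to a planar graph restricts to a concordance of the constituent knot $K$ to an unknot. The key observation is that a simple cycle in $\Gamma$ is itself a $1$-dimensional sub-CW-complex, and the concordance $F\colon \Gamma \times I \to S^3 \times I$ restricts to a map on the subcomplex spanned by that cycle. So first I would fix a simple cycle $C$ in the abstract graph $\Gamma$ whose smoothing yields $K$, and let $\Gamma_C \subset \Gamma$ be the corresponding subcomplex; then $F|_{\Gamma_C \times I}$ is a map from an annulus (if $C$ has no vertices of $\Gamma$, i.e.\ in the link case) or more precisely from a circle-shaped subcomplex, into $S^3 \times I$.

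The subtlety — and the step I expect to be the main obstacle — is that $F$ is only a \emph{rigid embedding}, not a smooth embedding: near the image of a vertex it looks like $(D^3 \times (0,1), X_n \times (0,1))$, so the restriction to $\Gamma_C \times I$ is not yet a smooth annular cobordism. I would handle this by smoothing. The cycle $C$ passes through finitely many vertices of $\Gamma$, and at each such vertex exactly two of the $n$ edge-germs belong to $C$; in the local model $(D^3 \times (0,1), X_n \times (0,1))$ these two germs are two of the cone rays, lying in a common plane, and they can be smoothed (rel the rest of $C \times I$ and rel the ends $t \in [0,\varepsilon) \cup (1-\varepsilon,1]$) to a single smooth arc, exactly as one smooths a piecewise-smooth curve to get a constituent knot in the first place (Definition~\ref{def:constituent_knot}). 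Carrying out this smoothing consistently over the whole interval $I$ — so that the result is still level-preserving near the ends and still agrees with the chosen smoothings of the cycles $C$ at $t=0$ and $t=1$ — is the technical heart of the argument; one does it in a neighborhood $D^3 \times (0,1)$ of each vertex arc using the product structure, and the choices of plane vary smoothly in $t$ because of the level-preserving diffeomorphisms in the definition.

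After smoothing, $F|_{\Gamma_C \times I}$ becomes a smooth embedding of an annulus $K \times I \hookrightarrow S^3 \times I$ which is a product near both ends. At $t = 0$ it is (isotopic to) the constituent knot $K$ of $G$, and at $t = 1$ it is a constituent knot $K'$ of the planar graph $H$ to which $G$ is concordant. Since every constituent knot of a planar graph is an unknot (as noted right after Definition~\ref{def:constituent_knot}), $K'$ is the unknot, and hence the annulus is a smooth concordance from $K$ to the unknot, i.e.\ $K$ is slice. One small point to address is that the "identity cobordism" / level-preserving hypothesis is not actually needed for general concordance in Definition~\ref{def:graph_equivalences}(c); but rigidity of the embedding is exactly what guarantees the local normal form that makes the vertex-smoothing possible, so the argument goes through with the definition as stated.
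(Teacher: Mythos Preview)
Your proposal is correct and follows essentially the same approach as the paper: restrict the concordance to the cycle, observe that the result is smooth away from the ``seams'' $v\times I$, and smooth along those seams using the local product model $(D^3\times(0,1),X_n\times(0,1))$ guaranteed by the rigid-embedding condition. The paper carries out the smoothing a bit more explicitly---it builds normal vector fields along each seam tangent to the two adjacent sheets, uses them to parametrize a ``corner'' chart $P\times I$ with $P=\{x\ge 0,\,y\ge 0\}$, and then replaces $\{xy=0\}$ by $\{xy=\varepsilon\}$---but this is just a concrete implementation of exactly the smoothing step you describe.
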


\begin{proof}
    Restricting the concordance of $G$ to a simple cycle $C\subset G$ gives a concordance $C\times I$ from $C$ to an unknotted planar spatial graph with $n$ edges and $n$ vertices. We will show that $K$ is slice by smoothing the concordance $C\times I$.

    The concordance $C\times I$ consists of $n$ ``sheets'' of the form $e\times I$ where $e$ is an edge of $C$, meeting at the ``seams'' $v\times I$ for $v$ a vertex of $v$. Clearly, it is enough to only consider a local picture around each seam. As $C\times I \to S^3\times I$ is a rigid embedding, we know that for each point of the sheet $e\times (0,1)$ we can find a neighborhood diffeomorphic (as pairs) to $(\R^4,\R^2)$. Using such diffeomorphisms locally and a partition of unity on a neighborhood of $e\times I$ it is possible to construct a nonvanishing vector field $\mathbf v$ on a neighborhood $U$ of $e\times (0,1)$ such that $\mathbf v$ is normal to $e\times (0,1)$ along it and $\mathbf v$ is tangent to each of the two sheets adjacent to $e\times (0,1)$.  

    Around each seam the pair of vector fields constructed as above, one for each sheet, defines a parametrization of a ``corner'': a map $P\times I \to S^3\times I$ where \[P = \{x\geq 0, 0\leq y \leq 1\}\cup \{0\leq x\leq 1, y\geq 0\}\subset \R^2.\] The preimage of a small neighborhood of a seam is $V = \{xy=0, x\geq0,y\geq0\} \subset P\times I$. In $P\times I$, it can be smoothed by taking instead of $V$ a curve $V' = \{xy=\varepsilon, x\geq0,y\geq0\}$. Mapping $V'$ back to $S^3\times I$ we obtain a smoothing of a seam. Performing this operation at each seam gives a smoothing of $C\times I$ to $K\times I$, that is, a concordance of $K$ to the unknot.
\end{proof}

\subsection{Framed concordance}\label{subsec:framed_conc}

The concordance invariants defined below require us to consider \emph{framed spatial graphs}. We define the framing of a spatial graph $G$ to be an \emph{oriented} surface  $\Sigma$ in which $G$ sits as a deformation retract, and denote a framed spatial graph by $(G,\Sigma)$.
% fn
\footnote{Note that some authors \cite{bao2020alexander, thurston2002algebra} do not require the framing surface to be orientable.}
We call two framings $\Sigma$ and $\Sigma'$ of $G$ \emph{equivalent} if there is an ambient isotopy of $S^3$ preserving $G$ and taking $\Sigma$ to $\Sigma'$.

Given a framed spatial graph $(G,\Sigma)$, there is a nonvanishing vector field on $G$ that always points out of the positive side of $\Sigma$. We call it a \emph{framing vector field}.
Then, given a planar projection of a spatial graph $G$, we can define the \emph{blackboard framing of $G$} to be a surface $\Sigma$ such that the vector field pointing orthogonally to the plane is the framing vector field for $\Sigma$. Note that thanks to our definition of a rigid spatial graph, all framing surfaces have a fixed structure near a vertex -- namely the neighborhood of the edges in the equatorial plane of the neighborhood on Figure~\ref{fig:x_n_nbhd}.

With these definitions, we observe that the equivalence relations from Definition~\ref{def:graph_equivalences} can be extended to the framed case.
\begin{definition}\label{def:framed_graph_equivalences}
    Let $\Gamma$ be a graph, $G = (\Gamma,g)$ and $H = (\Gamma, h)$ be spatial graphs, and let $\Sigma_G$, $\Sigma_H$ be framings of $G$ and $H$, respectively. We say that
    \begin{enumerate}[label=(\alph*)] 
        \item $(G,\Sigma_G)$ and $(H,\Sigma_H)$ are \emph{framed isotopic} if there is an isotopy map $f: G\times I \to S^3\times I$ that extends to an isotopy $\tilde f: \Sigma_G\times I \to S^3 \times I$ such that for all $t_0 \in I$ we have $\tilde f(x,t_0) = (f_{t_0}(x),t)$ for a framed spatial graph $f_{t_0}:\Sigma_G \to S^3$.
        \item $(G,\Sigma_G)$ and $(H,\Sigma_H)$ are \emph{framed concordant} if there is an (unframed) concordance map $f: \Gamma\times I \to S^3\times I$ that extends to a smooth embedding $\tilde f: \Sigma_G\times I \to S^3 \times I$.
        \item In particular, $(G, \Sigma)$ is \emph{framed slice} if it is framed concordant to a planar graph $(P,\Sigma_P)$ where $\Sigma_P$ is the blackboard framing.
    \end{enumerate}
\end{definition}

The relationship between framed and unframed concordance is summarised in the following proposition.
\begin{proposition}\label{prop:framing_extend}
    Given a framed spatial graph $(G,\Sigma_G)$ and an (unframed) concordance from $G$ to $H$, there exists a framing $\Sigma_H$ of $H$ and a framed concordance from $(G,\Sigma_G)$ to $(H,\Sigma_H)$.
\end{proposition}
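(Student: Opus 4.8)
The plan is to fatten the trace of the concordance by one normal direction, with the given framing surface sitting in the bottom slice, and take $\Sigma_H$ to be the top slice. Write $f\colon\Gamma\times I\to S^3\times I$ for the rigid embedding realizing the unframed concordance and set $F=f(\Gamma\times I)\subset S^3\times I$. By Definition~\ref{def:map_properties}(c), $F$ is a two-complex whose smooth sheets are the images $f(e\times I)$ of edges, with local model $(D^4,D^2)$, and whose singular seams are the images $f(v\times I)$ of vertices, with local model $(D^3\times(0,1),X_n\times(0,1))$. Since $\Gamma\times I$ strong deformation retracts onto $\Gamma\times\{0\}$, the map $f$ transports this to a strong deformation retraction of $F$ onto $G=f(\Gamma\times\{0\})$; in particular $F$ is homotopy equivalent to a graph. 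Recall also that $\Sigma_G$ is recorded near $G$ by its framing vector field $\nu_G$, a nonvanishing section of the normal bundle of $G$ in $S^3$ along the edges, with the standard planar behaviour near each vertex forced by rigidity, namely the $Y_n$-neighbourhood of $X_n$ in the equatorial plane of Figure~\ref{fig:x_n_nbhd}.

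First I would extend $\nu_G$ to a nonvanishing normal vector field $\nu$ along $F$ inside $S^3\times I$. Over the sheets the unit normal bundle of $F$ is an $S^1$-bundle and $\nu_G$ is a section of it over $G$; since $G\hookrightarrow F$ is a cofibration and a homotopy equivalence (equivalently, since the oriented normal $\R^2$-bundle of $F$ is trivial, being classified by $H^2$ of a graph), this section extends to a global section $\nu$, compatibly with the orientation of $\Sigma_G$. Near each seam $f(v\times I)$ the rigid model forces the framing to be the planar $Y_n\times(0,1)$ inside $D^3\times(0,1)$, and since this already agrees with $\nu_G$ at the bottom endpoint of the seam one arranges $\nu$ to equal this standard framing in a neighbourhood of each seam. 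No obstruction arises, as all choices take place over a space homotopy equivalent to a one-complex.

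Next I would thicken $F$ in the $\nu$-direction to a three-manifold with corners $W\subset S^3\times I$ diffeomorphic to $\Sigma_G\times I$ in a way that restricts to the given inclusion $\Sigma_G\hookrightarrow S^3\times\{0\}$ on the bottom. Concretely, one fattens each sheet $f(e\times I)$ to a band $(e\times[-1,1])\times I$ using a tubular neighbourhood along $\nu$, fattens each seam $f(v\times I)$ to $Y_n\times(0,1)$ using the standard model fixed above, and glues these pieces along the seams; the result is a copy of $\Sigma_G\times I$ since $\Sigma_G$ is exactly the bands over the edges glued to the $Y_n$-stars over the vertices with the same combinatorics. Taking the tubular neighbourhood of $F$ to restrict on $F\cap(S^3\times\{0\})=G$ to the collar of $G$ inside $\Sigma_G$ cut out by $\nu_G$ --- possible since a tubular neighbourhood may be prescribed near a boundary face --- the bottom slice of $W$ is $\Sigma_G$, the locus where the band coordinate vanishes is the concordance $f$, and the top slice $\Sigma_H:=W\cap(S^3\times\{1\})$ is an oriented surface containing $H$ as a deformation retract. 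The resulting parametrization $\Sigma_G\times I\to S^3\times I$ with image $W$ is then a framed concordance from $(G,\Sigma_G)$ to $(H,\Sigma_H)$ in the sense of Definition~\ref{def:framed_graph_equivalences}(b).

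The homotopy-theoretic extension of the framing field is essentially free; the step I expect to be the main obstacle is performing the thickening on the non-manifold two-complex $F$ so that it genuinely fits together along the seams. One must choose the normal field $\nu$ and the band coordinates on the several sheets incident to a given seam simultaneously, so that they assemble into the rigid local model $(D^3\times(0,1),X_n\times(0,1))$ with its standard planar framing and so that all gluing maps are smooth --- precisely the bookkeeping that the rigidity conditions of Definition~\ref{def:map_properties} were arranged to allow, but which must be checked with care.
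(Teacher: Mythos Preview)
Your approach is essentially the same as the paper's: both proofs extend framing data from $G$ across the concordance $F$ using the rigid local models along the seams, and then integrate/thicken to obtain an embedded copy of $\Sigma_G\times I$. The paper is slightly more explicit, constructing both a field $\tilde{\mathbf v}_0$ normal to the eventual framing surface and auxiliary fields $\tilde{\mathbf v}_i$ along each seam tangent to it, then taking integral surfaces; you package the same extension step obstruction-theoretically, as the vanishing of an obstruction in $H^2$ of a space homotopy equivalent to a graph, which is a clean way to say the same thing.

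One small slip worth fixing: in the paper's convention the framing vector field $\nu_G$ points \emph{out of} the positive side of $\Sigma_G$, i.e.\ it is normal to $\Sigma_G$. Thickening $F$ literally ``in the $\nu$-direction'' would therefore produce, on the bottom slice, a band perpendicular to $\Sigma_G$ rather than $\Sigma_G$ itself; what you want is to thicken in the complementary normal direction (orthogonal to $\nu$ inside the rank-two normal bundle of each sheet), which is well-defined and oriented once $\nu$ is chosen. Your description of the seam pieces as $Y_n\times(0,1)$ is already consistent with this corrected reading. With that adjustment your construction agrees with the paper's integral-surface step and the argument goes through.
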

\begin{proof}
    Considering $G\times\{0\}\subset S^3$, we define a set of nonvanishing vector fields $\{\mathbf v_0, \ldots, \mathbf v_n\}$, where $n$ is the number of edges of $G$, as follows. First, the vector field $\mathbf v_0$ is chosen on $G\times\{0\}$ such that it is normal to $\Sigma_G$. Then, $\mathbf v_1,\ldots, \mathbf v_n$ are defined on a neighborhood of each edge $e_i$ so that each $\mathbf v_i$ is tangent to both edges adjacent to $e_i$, as well as tangent to the surface $\Sigma_G$. Then, these $n+1$ vector fields can be extended to the whole concordance, to obtain nonvanishing vector fields $\{\mathbf{\tilde{v}}_0, \ldots, \mathbf{\tilde{v}}_n\}$ such that:
    \begin{enumerate}[label=(\alph*)]
        \item $\mathbf{\tilde{v}}_0$ is defined on $G\times I$, and given a neighborhood $(U, G\cap U)$ of $v\times t_0 \in V(G)\times I$ diffeomorphic to $W=(D^3\times (0,1), X_n\times (0,1))$, the image of $\mathbf{\tilde{v}}_0$ in $W$ is \emph{normal} to the plane containing $X_n\times t$ for each $t\in(0,1)$;
        \item for $i>0$, the vector field $\mathbf{\tilde{v}}_i$ defined on a neighborbood of $e_i\times (0,1)$ as in the proof of Proposition~\ref{prof:const_knot_slice}: $\mathbf{\tilde{v}}_i$ is normal to the sheet $e_i\times (0,1)$ and tangent to adjacent sheets. Additionally, the image of $\mathbf{\tilde{v}}_i$ under the diffeomorphism $U\to W$ from (a) is \emph{tangent} to the plane containing $X_n\times t$ for each $t\in(0,1)$.
    \end{enumerate}
    These conditions first define a fixed extension of the vector fields $\{\mathbf v_1, \ldots, \mathbf v_n\}$ to a neighborhood of $G\times \{0\} \cup V(G)\times I$. With these definitions, we observe that the equivalence relations. Finally, we find a framing $\Sigma_G\times I$ of $G\times I$ by finding integral surfaces of $\mathbf{\tilde{v}}_i$ for all $i>0$ that are normal to $\mathbf{\tilde{v}}_0$ for each $t\in I$. 
\end{proof}

\section{Linking numbers}\label{sec:lk} 

In this section we describe a way to associate a set of linking numbers to a framed spatial graph, and an invariant of framed concordance arising from it. 

To achieve this, first observe that for a framed spatial graph $(G,\Sigma)$, a framing vector field of $\Sigma$ allows us to define a \emph{push-off} of any subgraph of $G$ in the direction of the vector field. 
Then, for any constituent knot $K \subset (G,\Sigma)$, we can define $K^+$ to be such a push-off of $K$.
This leads to the following definition.
\begin{definition}
    Given a framed spatial graph $(G,\Sigma)$ and two constituent knots $K_1$, $K_2$, we define their linking number as $\lks(K_1,K_2) = \lk(K_1,K_2^+)$.
\end{definition}
It is easy to check that $\lks(K_1,K_2) = \lks(K_2,K_1)$.

\begin{proposition}\label{prop:lk_no}
    For all constituent knots $K_i,K_j \subset G$, the linking numbers $\lks(K_i,K_j)$ are invariant under concordances of $(G,\Sigma)$.
\end{proposition}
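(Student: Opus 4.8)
The plan is to reduce the statement to the classical fact that linking numbers of two-component links are concordance invariants. Recall that for knots, if $J_1 \sqcup J_2$ is concordant (componentwise, through disjoint annuli) to $J_1' \sqcup J_2'$, then $\lk(J_1,J_2) = \lk(J_1',J_2')$; this follows because the linking number is $[J_1] \in H_1(S^3 - J_2) \cong \Z$, and a concordance gives a cobordism between the complements realizing an isomorphism on $H_1$. So the whole task is to manufacture, from a framed concordance of $(G,\Sigma)$, an honest concordance of the two-component link $K_i^+ \sqcup K_j^+$ (or $K_i \sqcup K_j^+$).

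First I would use Proposition~\ref{prof:const_knot_slice}'s smoothing technique: a simple cycle $C \subset G$ sits inside the rigid embedding $\Gamma\times I \hookrightarrow S^3\times I$, and smoothing the seams $v\times I$ produces a smooth concordance $C\times I$ from the constituent knot at level $0$ to the one at level $1$. Now I need the framed version. Since the concordance is framed, Definition~\ref{def:framed_graph_equivalences}(b) gives a smooth embedding $\tilde f\colon \Sigma_G\times I \to S^3\times I$ extending it. The push-off $K^+$ of a constituent knot $K$ lies in $\Sigma$, so $K^+\times I$ sits inside $\Sigma_G\times I$; restricting $\tilde f$ to it and smoothing (the seams of $K^+$ are smooth parallels of the seams of $K$ inside the surface, so the same corner-smoothing argument applies, carried out consistently so that $K_i^+$ and $K_j^+$ stay disjoint throughout — which they do since they are disjoint curves in the surface $\Sigma_G$ and $\tilde f$ is an embedding) yields a smooth concordance of $K_i^+$ from level $0$ to level $1$, and likewise for $K_j$ (or $K_j^+$). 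Taking these two concordances together and observing that $\tilde f$ being an embedding keeps the two families of annuli disjoint, we get a concordance of the link $K_i \sqcup K_j^+$.

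Then I invoke the classical invariance of the linking number under link concordance to conclude $\lk(K_i, K_j^+)$ at level $0$ equals $\lk(K_i, K_j^+)$ at level $1$, i.e. $\lks(K_i,K_j)$ is preserved. One should also record the symmetry $\lks(K_i,K_j)=\lks(K_j,K_i)$ already noted, so it does not matter whether we push off the first or the second knot.

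The main obstacle is the bookkeeping around the vertices: ensuring that the seam-smoothings for the various cycles $K_i$ and their push-offs $K_i^+$ can be performed \emph{simultaneously and compatibly}, so that curves which are disjoint at the endpoints remain disjoint throughout the smoothed cobordism. The key point is that all of this happens inside the single embedded surface-with-product $\tilde f(\Sigma_G\times I)$, where the model of a vertex neighborhood is the fixed local picture from Figure~\ref{fig:x_n_nbhd} crossed with $(0,1)$; choosing the corner-smoothing parameter $\varepsilon$ small and uniform over all seams, and pushing each $K_i^+$ off by an amount small compared to $\varepsilon$, keeps everything embedded and disjoint. Once that is set up, the rest is the standard $H_1$ argument for linking numbers and is routine.
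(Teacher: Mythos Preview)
Your approach is correct and takes a genuinely different route from the paper's proof. The paper argues diagrammatically: it decomposes a framed concordance, via Morse theory, into a sequence of elementary moves on diagrams (birth and death of an unknotted circle, and band attachment between two points on a single edge or between an edge and a circle component), and then checks by hand that none of these moves changes any $\lks(K_i,K_j)$, since band moves introduce crossings in cancelling pairs. Your argument instead packages the framed concordance $\tilde f\colon \Sigma_G\times I \hookrightarrow S^3\times I$ as a single embedded object and extracts from it an honest two-component link concordance from $K_i\sqcup K_j^+$ to the corresponding pair at the other end, after which the classical homological invariance of linking numbers finishes the job. Your route is more conceptual and makes the role of the framing transparent; the paper's route is more elementary in that it never invokes the seam-smoothing machinery of Proposition~\ref{prof:const_knot_slice} and stays entirely at the level of diagrams.

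One small wobble worth fixing: in your parenthetical you assert that $K_i^+$ and $K_j^+$ are disjoint curves in $\Sigma_G$, but this fails whenever $K_i$ and $K_j$ share an edge, since their push-offs then share an arc. This is harmless because your actual argument uses $K_i$ and $K_j^+$, and \emph{those} are disjoint (one lies in $G$, the other in $\Sigma_G\setminus G$); the parenthetical should simply be reworded to reflect that. Relatedly, the inequality between parameters is reversed from what you wrote: you want the corner-smoothing scale of $K_i$ to be small relative to the push-off distance of $K_j^+$, so that the smoothed $K_i$ stays inside a thin neighborhood of $G$ that misses $K_j^+$.
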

Before discussing the proof, let us state an important corollary.
\begin{corollary}\label{cor:lk_condition}
    Given a slice spatial graph $G$, there exists a framing $\Sigma$ of $G$ such that for all constituent knots $K_i \subset G$, we have $\lks(K_i,K_j) = 0$. 
\end{corollary}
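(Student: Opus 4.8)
The plan is to deduce Corollary~\ref{cor:lk_condition} from Proposition~\ref{prop:lk_no} together with Proposition~\ref{prop:framing_extend}. First I would invoke the hypothesis that $G$ is slice: by Definition~\ref{def:planar_slice}(b) there is a concordance from $G$ to a planar spatial graph $P$ embedded in $S^2 \subset S^3$. Next, pick any framing $\Sigma_G$ of $G$ whatsoever (one exists, e.g.\ a regular neighborhood of $G$ inside a tubular neighborhood, pushed to be orientable since $S^3$ is orientable, or simply a blackboard framing coming from a planar projection). Applying Proposition~\ref{prop:framing_extend} to this $(G,\Sigma_G)$ and the given concordance produces a framing $\Sigma_P$ of $P$ and a \emph{framed} concordance from $(G,\Sigma_G)$ to $(P,\Sigma_P)$.

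Then I would appeal to Proposition~\ref{prop:lk_no}: linking numbers of constituent knots are invariant under framed concordance, so for every pair of constituent knots $K_i, K_j \subset G$ we have $\lks(K_i,K_j) = \lk_{\Sigma_P}(K_i', K_j')$, where $K_i', K_j'$ are the corresponding constituent knots of $P$ (the framed concordance identifies constituent cycles of $G$ with those of $P$, since both have abstract topology $\Gamma$). The final step is to observe that these right-hand linking numbers all vanish: $P$ is planar, so $P \subset S^2$, and any framing $\Sigma_P$ restricted to the push-offs lets us realize $K_i'$ and $K_j'^{+}$ as disjoint curves that can be simultaneously pushed off $S^2$ to opposite sides (or bound disjoint disks in the two balls $D^3_\pm$ that $S^2$ separates $S^3$ into), giving $\lk_{\Sigma_P}(K_i', K_j') = 0$. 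Taking $\Sigma := \Sigma_G$ is then the desired framing.

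I expect the main obstacle to be the last step --- verifying that the linking numbers $\lk_{\Sigma_P}(K_i', K_j')$ genuinely vanish for the framing $\Sigma_P$ produced by Proposition~\ref{prop:framing_extend}, rather than for some hand-chosen framing of $P$. The subtlety is that $\Sigma_P$ is whatever surface the vector-field extension in that proposition happens to yield, and a priori its framing vector field need not be the blackboard framing of $S^2$; it could differ by twists along edges. What saves the argument is that twisting the framing along an edge of a \emph{constituent knot} changes its self-framing but not the linking number $\lks(K_i, K_j)$ of two \emph{distinct} constituent knots when $i \neq j$ (an edge twist changes $K_j^+$ by a curve homologous to it in a neighborhood disjoint from $K_i$), so the relevant linking numbers agree with those computed from the blackboard framing of $P$, which are manifestly $0$ because every constituent knot of a planar graph is an unknot bounding a disk in $S^2$ and push-offs of disjoint unknots in $S^2$ are unlinked. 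One should be slightly careful about the case $i = j$: the statement as phrased ranges over $K_i, K_j$ with no restriction, but the self-linking $\lks(K_i, K_i)$ is the framing (writhe) of $K_i$, which vanishes for the blackboard framing of the planar $P$ and hence, after the same edge-twist bookkeeping, can be arranged to vanish by choosing $\Sigma_G$ appropriately --- this is precisely why the statement asserts existence of \emph{a} framing $\Sigma$ rather than the claim for every framing.
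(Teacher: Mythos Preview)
Your overall strategy---combine Proposition~\ref{prop:framing_extend} with Proposition~\ref{prop:lk_no}---is the same as the paper's, but you run it in the wrong direction, and the gap you yourself flag is real and not repaired by the argument you propose.

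The issue is your claim that an edge twist ``changes $K_j^+$ by a curve homologous to it in a neighborhood disjoint from $K_i$,'' hence does not affect $\lks(K_i,K_j)$ for $i\neq j$. This is false whenever $K_i$ and $K_j$ share an edge, which is the generic situation for constituent knots (already for a $\theta$-curve the two constituent knots share $v_0$). A full twist on a shared edge $e$ replaces $K_j^+$ by $K_j^+$ plus a meridian of $e$; since $e\subset K_i$, that meridian links $K_i$ once, so $\lks(K_i,K_j)$ shifts by $\pm 1$. Thus the framing $\Sigma_P$ produced by extending an arbitrary $\Sigma_G$ along the concordance can perfectly well have nonzero $\lk_{\Sigma_P}(K_i',K_j')$, and your argument does not establish that $\Sigma_G$ has the desired property. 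The self-linking case $i=j$ is even worse, as you note: you cannot retroactively ``choose $\Sigma_G$ appropriately'' after having fixed it.

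The paper avoids all of this by reversing the direction: start from the planar graph $P$ with its \emph{blackboard} framing $\Sigma_P$, where $\lk_{\Sigma_P}(K_i',K_j')=0$ is immediate, and then apply Proposition~\ref{prop:framing_extend} to the reversed concordance (from $P$ to $G$) to obtain a framing $\Sigma$ of $G$. Proposition~\ref{prop:lk_no} then gives $\lks(K_i,K_j)=0$ directly. This is a one-line argument with no bookkeeping about twists.
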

\begin{proof}
    This follows from Proposition~\ref{prop:framing_extend} by extending the concordance from the planar graph with its blackboard framing to $G$.
\end{proof}
In Section~\ref{subsec:linking_homology} below, we will show that this corollary allows us to apply the criterion in Proposition~\ref{prop:lk_no} to spatial graphs $G$ without explicitly using framings.

\begin{proof}[Proof of Proposition~\ref{prop:lk_no}]
    The proof mirrors the proof of Theorem~3 in \cite{hosokawa1967concept}. Just as with links, using Morse theory one can view concordances of spatial graphs as sequences of diagram moves of the following types:
    \begin{enumerate}[label=(\alph*)]
        \item Introducing a disjoint unknot to the diagram;
        \item Contracting a disjoint unknot component;
        \item Introducing a band between either two points on the same edge of the graph, or between an edge and an unknot component.
    \end{enumerate}
    
    Introducing or removing unknots to the diagrams does not affect the linking numbers. Moreover, when a band is introduced to the diagram, the new crossings always appear in pairs with opposite signs, such that the total linking of constituent knots is not affected.
\end{proof}

\begin{remark}
    In \cite{yasry}, a notion of spatial graph concordance is introduced in which bands could be attached between points on different edges of a graph. This definition is not equivalent to ours, as it allows for concordances which are not embeddings of $\Gamma \times I$ into $S^3 \times I$.
\end{remark}

\subsection{Space of all framings of a spatial graph}\label{subsec:all_framings}

To proceed, we need to describe the space of all framings of a given spatial graph $G$. For the sake of presentation, let us restrict ourselves to a connected planar spatial graph $P$ with its planar embedding. 
Let $\Sigma_P$ be the blackboard framing of $P$. 
The surface $\Sigma_P$ consists of a disk for each vertex of $G$ and a band for each edge, all lying in the plane. 
Any other framing of $P$ can be isotoped to be represented by a disk in the plane for each vertex of $P$, and a band with some number of \emph{twists} on it for each edge of $P$. As with arcs of a knot, we distinguish between a \emph{full twist} and a \emph{half-twist}. Examples of framings for a planar $\theta$-curve are presented in Figure~\ref{fig:edge_twists}.

To state and prove the main claim of the section, we need the following definition.
\begin{definition}\label{def:edge_cut}
    Given a connected abstract graph $\Gamma$, an \emph{edge cut} $C \subset E(\Gamma)$ is a set of edges such that $\Gamma- C$ is disconnected, and for any $e\in C$, the graph $(\Gamma - C) \cup e$ is connected.
\end{definition}
With this, we obtain the following result connecting any two framings of $P$.
\begin{proposition}\label{prop:all_framings}
    Any framing $\Sigma'$ of a planar spatial graph $P$ can be obtained from the blackboard framing $\Sigma_P$ by repeatedly applying the following operations to it:
    \begin{enumerate}[label=(\alph*)]
        \item introducing a full twist on $\Sigma_P$ to an edge of $P$;
        \item introducing a half-twist of the same orientation (positive or negative) to each edge of an edge cut of $P$.
    \end{enumerate}
\end{proposition}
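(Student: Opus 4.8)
The plan is to convert the statement into linear algebra over $\Z/2$ on the graph $\Gamma$ underlying $P$, after recording a framing by combinatorial data. Using the fact, noted after Definition~\ref{def:framed_graph_equivalences}, that every framing surface has the fixed planar structure near each vertex, together with the classification of bands over an arc up to isotopy rel the end disks (the core of the band being the given planar arc, hence unknotted), I would show that every framing $\Sigma'$ of $P$ is equivalent to one built from a standard planar disk at each vertex $v$, equipped with an orientation recorded by $\epsilon_v\in\{\pm1\}$ against the fixed orientation of the plane, and a band over each edge $e$ carrying $h_e\in\Z$ half-twists relative to the band of $\Sigma_P$ on $e$. Because $\Sigma'$ is oriented, the data must satisfy the compatibility $\epsilon_v\epsilon_w=(-1)^{h_e}$ for each edge $e=vw$ — an odd number of half-twists reversing the coorientation carried across the band; conversely any compatible pair realizes a framing, and the pair $(\epsilon,h)$ determines the equivalence class. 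The blackboard framing is $(\epsilon,h)=(\mathbf 1,\mathbf 0)$.

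Next I would compute the effect of the two moves on $(\epsilon,h)$, and on the mod-$2$ reductions $\bar\epsilon\in(\Z/2)^V$, $\bar h\in(\Z/2)^E$, so that $\epsilon_v=(-1)^{\bar\epsilon_v}$ and compatibility becomes $d(\bar\epsilon)=\bar h$ where $d(s)_e=s_v+s_w$. Move (a) changes one $h_e$ by $\pm2$, doing nothing mod $2$. By Definition~\ref{def:edge_cut} an edge cut $C$ has exactly two sides, $C=\partial A:=\{\text{edges between }A\text{ and }V\setminus A\}$ with $\Gamma[A]$ and $\Gamma[V\setminus A]$ connected; move (b) along $C$ changes $h$ by $\pm\mathbf 1_{\partial A}$ and, to keep orientability, flips $\epsilon$ on one side of $C$ — the two choices differing by the global reversal $\epsilon\mapsto-\epsilon$, which is itself achievable (e.g. by applying (b) twice to one cut with opposite signs). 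Since $d(\mathbf 1_A)=\mathbf 1_{\partial A}$, mod $2$ a move (b) adds $(\mathbf 1_A,\mathbf 1_{\partial A})$ to $(\bar\epsilon,\bar h)$.

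The crux is the standard algebraic-graph-theory fact that the cut (bond) space of $\Gamma$, namely $\mathrm{im}(d)\subseteq(\Z/2)^E$, is spanned by the bond indicators $\{\mathbf 1_{\partial A}:\partial A\text{ an edge cut}\}$: $\mathrm{im}(d)$ is generated by the stars $d(\mathbf 1_{\{v\}})$, which are themselves edge cuts when $v$ is not a cut vertex and otherwise decompose as $d(\mathbf 1_{\{v\}})=\sum_i d(\mathbf 1_{A_i})$ over the components $A_i$ of $\Gamma-v$, each $\partial A_i$ being an edge cut (the cut-vertex case is the only subtlety). Granting this, I finish as follows: given a target framing with data $(\epsilon^*,h^*)$, write $\bar{h^*}=d(\bar{\epsilon^*})=\sum_j\mathbf 1_{\partial A_j}$ and perform the matching moves (b); the resulting data has $\bar h=\bar{h^*}$ and $\bar\epsilon=\sum_j\mathbf 1_{B_j}$ for side choices $B_j$, and since $d(\bar\epsilon)=\bar{h^*}=d(\bar{\epsilon^*})$ with $\ker d=\{0,\mathbf 1_V\}$ ($\Gamma$ connected), $\bar\epsilon$ agrees with $\bar{\epsilon^*}$ after at most one global reversal, which toggling a side choice (or an extra round-trip move (b)) supplies. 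Now $h$ and $h^*$ agree mod $2$, so moves (a) bring $h$ to $h^*$, producing the data $(\epsilon^*,h^*)$ and hence a framing equivalent to the target.

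I expect the real obstacle to be the normal-form step: making precise that a framing of the fixed planar graph $P$ is, up to equivalence, exactly the compatible data $(\epsilon_v),(h_e)$, that half-twist counts on bands are well defined, and that equal data give equivalent framings. Steps two and three are then bookkeeping together with the classical fact that bonds span the cut space, the cut-vertex decomposition being the one point to handle with care.
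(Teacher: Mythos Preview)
Your argument is correct and arrives at the same two-step reduction as the paper --- first kill the mod-$2$ part of the half-twist data with operations (b), then finish with operations (a) --- but the mod-$2$ step is handled quite differently. The paper proceeds geometrically: after reducing via (a) so that each edge carries at most one positive half-twist, it lets $C$ be the set of edges with a half-twist and argues that $P-C$ must be disconnected, since otherwise one could complete some $e\in C$ to a cycle using only edges outside $C$, producing a cycle with exactly one half-twist and hence a M\"obius band inside the orientable surface $\Sigma'$. Thus $C$ contains an edge cut $C_1$; one peels it off and iterates, exhibiting $C$ as a \emph{disjoint} union of edge cuts. You instead invoke the standard fact from algebraic graph theory that the cut space $\mathrm{im}(d)\subset(\Z/2)^E$ is spanned by bond (edge-cut) indicators, proving it via vertex stars and the cut-vertex decomposition. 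Your route is more structural and makes the link to cycle/cut duality explicit; the paper's is entirely self-contained, needs no imported graph-theory lemma, and as a bonus yields a disjoint decomposition into edge cuts rather than merely a $\Z/2$-sum. One minor simplification on your side: the vertex-sign data $(\epsilon_v)$ is redundant, since framing equivalence is ambient isotopy and does not see the orientation of $\Sigma$, and $\epsilon$ is in any case determined by $\bar h$ up to a global sign on a connected graph; dropping it removes the side-choice and global-reversal bookkeeping in your final paragraph.
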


\begin{proof}
    First of all, applying each of these operations yields an \emph{orientable surface}: this is clear for operation (a), and for operation (b) this follows from the fact that after applying it, every cycle of $P$ has an \emph{even} number of half-twists on it (if a cycle passes an edge in an edge cut, then it has to pass through another edge in the same cut, always picking up half twists in pairs.)

    Then, we need to show that given an arbitrary framing $\Sigma'$ of $P$, we can turn it into $\Sigma_P$ by applying operations (a) and (b). 
    First, we reduce to the case of $\Sigma'$ having either no twists or a single positive half-twist on each edge by applying operation (a) as necessary.
    Let $C \in E(P)$ be the set of edges of $P$ on which $\Sigma'$ has a half-twist. We finish the proof by showing that $C$ is a union of edge cuts.

    First, $P-C$ is disconnected: if it were connected, then choose $e \in C$ and complete it to a cycle with edges in $P-C$. This cycle has only one half twist and therefore $\Sigma'$ contains a M\"obius band, which is a contradiction.
    Therefore, $C$ has to contain an edge cut $C_1$. Consider the set $C' = C-C_1$. If $C'$ is empty, we are done, and if not, $P-C'$ is again disconnected and therefore contains another edge cut, so we can repeat the argument. As $C$ has a finite number of elements, we eventually find a presentation $C = C_1\cup \cdots\cup C_k$ of $C$ as a union of edge cuts.
\end{proof}

Proposition~\ref{prop:all_framings} is true for all spatial graphs, and the planarity was only used to obtain a distinguished (blackboard) framing for the ease of presentation.

\begin{figure}
    \centering
    \includegraphics[width=0.6\textwidth]{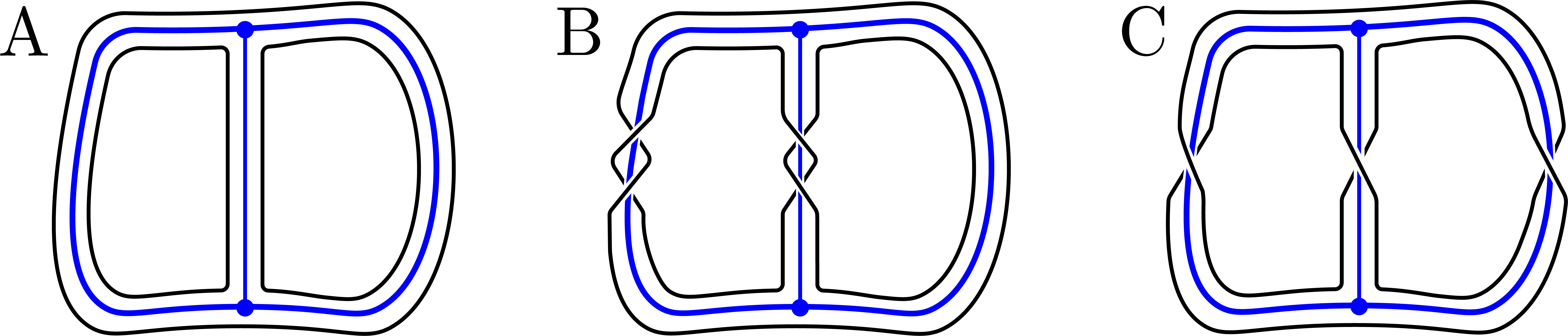}
    \caption{(A) A planar graph $P$, shown in purple, with a blackboard framing $\Sigma_0$ shown in black. (B) The same planar graph $P$ with a framing obtained from $\Sigma_0$ by introducing some twists on edges. (C) The same graph $P$ with a framing obtained from $\Sigma_0$ by applying a half-twist to each edge of an edge cut of $P$.} 
    \label{fig:edge_twists}
\end{figure}

\subsection{A homological perspective}\label{subsec:linking_homology}

Let us examine Corollary~\ref{cor:lk_condition} further. We will use Proposition~\ref{prop:all_framings} to suggest the way it can be applied. Let $ G = (\Gamma,f)$ be a spatial graph with $k$ edges and $b_1(G) = n$, and let $P = (\Gamma, p)$ be a planar embedding of $\Gamma$. 

For a fixed maximal tree $T\subset G$ we let $\{K_1\, \ldots, K_n\}$ be a set of constituent knots of $G$, defined as follows: for each edge $e_i\in E(G)-E(T)$, let $K_i$ be $e_i \cup T'$ where $T'\subset T$ is the unique path graph connecting the endpoints of $e_i$ through $T$. Then, by Alexander duality, $H_1(G)\cong H_1(S^3 - G) \cong \Z^n$, with the basis given by the meridians around each $e_i$.

To any framing $\Sigma$ of $G$ we can associate push-offs $K_i^+$ of constituent knots, each of which represents a homology class in $H_1(S^3 - G)$. These homology classes are determined by linking numbers between constituent knots. 

For a planar graph $P$ with a blackboard framing all linking numbers are zero. Then, Proposition~\ref{prop:lk_no} implies that the homology classes of push-offs of constituent knots are preserved under (framed) concordance, and Corollary~\ref{cor:lk_condition} says that if $G$ is slice (concordant to $P$), there is a framing $\Sigma$ of $G$ with $[K_i^+] = 0 \in H_1(S^3-G)$ for all $i$. 

Now, consider a spatial graph $G$ and an arbitrary framing $\Sigma$ of $G$. Applying a full twist to an edge $e_j$ of $G$ changes the framing, and therefore changes the homology classes of constituent knots $[K_i^+]$ by an element $t_j^{(i)} \in H_1(S^3 - G)$. In particular, $t_j^{(i)}$ is a meridian of an edge $e_j$ if $e_j \in K_i$, and zero otherwise. Similarly, applying a half twist to an edge cut $C\subset E(G)$ changes the homology of constituent knots $[K_i^+]$ by an element $c_j^{(i)} \in H_1(S^3 - G)$. We again have $c_j^{(i)} = 0$ if $K_i \cap C = \emptyset$. 

The information about all of the constituent knots can be put together by considering an element
\begin{equation}
    ([K_1^+],\ldots, [K_n^+]) \in H_1(S^3 - G)^n \cong \Z^{n^2}
\end{equation}
of the direct sum of first homology groups of the complement. Furthermore, we can view elements of $\Z^{n^2}$ as $n$-by-$n$ matrices $\{a_{ij}\}$, where the element $a_{ij}$ is the linking number between $K_i^+$ and $K_j$. As the linking numbers are symmetric, $([K_1^+],\ldots, [K_n^+])$ is a symmetric matrix. To account for this, let $\symmhomology{G}$ be the quotient of $H_1(S^3 - G)^n\cong \Z^{n^2}$ by a subgroup of differences of the off-diagonal elements. That is, if the generators of the $j$th copy of $H(S^3-G)$ in $H_1(S^3 - G)^n$ are denoted by $x_i^{j}$, then 
\begin{equation}
    \symmhomology G = \frac{\langle x_1^{1}, x_2^{1},\ldots, x_{n-1}^{n},x_n^{n}\rangle}{\langle x_i^{j} - x_j^{i}\text{ for $1\leq i < j \leq n$} \rangle}.
\end{equation}
We define $\mathbf{\widetilde{K}} (G,\Sigma) \in S(G)$ to be the image of $([K_1^+],\ldots, [K_n^+])$ in the symmetric quotient.

Then, the discussion of the effect of edge twists above implies that, given two framings $\Sigma$ and $\Sigma'$ of $G$, we have
\begin{equation}\label{eq:diff_K}
    \mathbf{\widetilde{K}}(G,\Sigma) -  \mathbf{\widetilde{K}}(G,\Sigma') = \sum_{e_j \in E(G)} a_j T_j + \sum_{\text{an edge cut }S_j} b_j C_j
\end{equation}
with $a_j,b_j \in \Z$ and
\begin{align*}
    T_j & = \bigoplus_{\text{constituent knots } K_i} t_j^{(i)}, \\
    C_j & = \bigoplus_{\text{constituent knots } K_i} c_j^{(i)}.
\end{align*}
Using this, we define a \emph{module of framings} 
\[
    \Fr(\Gamma) = \frac{\symmhomology G}{\langle T_1,\ldots, T_k, C_1, \ldots, C_r\rangle},
\] 
where $k$ is the number of edges of $G$ and $r$ is the number of edge cuts. Note that we write $\Fr(\Gamma)$ because the module is independent of a particular embedding of $\Gamma$: an isomorphism $H_1(S^3 - G) \cong H_1(S^3 - H)$ for any two spatial graphs $G$, $H$ with the same abstract topology $\Gamma$ implies that $\symmhomology{G} = \symmhomology{H}$ and therefore $\Fr(\Gamma)$ depends only on the abstract topology of the spatial graph. Below we will also write $\symmhomology{\Gamma}$ to reflect this, however this group still needs to be computed using a concrete spatial embedding.

Finally, we let $\mathbf{K}(G)$ to be the image of $\widetilde{\mathbf K}(G,\Sigma)$ in the quotient $\Fr(\Gamma)$.
With this notation, the following consequence of Corollary~\ref{cor:lk_condition} is true.

{
    \renewcommand{\thetheorem}{\ref{thm:lk_application}}
    \begin{theorem} 
        Let $G$ and $H$ be spatial graphs.
        If there is a concordance between $G$ and $H$, then $\mathbf K(G) = \mathbf K(H)$.
    \end{theorem}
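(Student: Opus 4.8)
The plan is to leverage Proposition~\ref{prop:framing_extend} and Proposition~\ref{prop:lk_no} to transport framings across the given unframed concordance and compare the resulting invariants. The key observation is that $\mathbf{K}(G)$, although defined using a choice of framing $\Sigma$, is independent of that choice precisely because we have quotiented $\symmhomology{\Gamma}$ by all the $T_j$ and $C_j$; this is exactly the content of equation~\eqref{eq:diff_K} together with Proposition~\ref{prop:all_framings}, which tells us that any two framings differ by a sequence of full twists on edges and half-twists on edge cuts, hence $\widetilde{\mathbf{K}}(G,\Sigma) - \widetilde{\mathbf{K}}(G,\Sigma')$ lies in $\langle T_1,\ldots,T_k,C_1,\ldots,C_r\rangle$. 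So the first step is to record that $\mathbf{K}(G) \in \Fr(\Gamma)$ is well-defined.

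Next, given the unframed concordance $f:\Gamma\times I \to S^3\times I$ from $G$ to $H$, I would pick \emph{any} framing $\Sigma_G$ of $G$ and apply Proposition~\ref{prop:framing_extend} to obtain a framing $\Sigma_H$ of $H$ and a framed concordance $\tilde f:\Sigma_G\times I \to S^3\times I$ from $(G,\Sigma_G)$ to $(H,\Sigma_H)$. The framed concordance restricts, on each constituent knot, to a concordance of $K_i \subset G$ to the corresponding $K_i \subset H$ (after the seam-smoothing of Proposition~\ref{prof:const_knot_slice}), and the push-off $K_i^+$ with respect to $\Sigma_G$ is carried to the push-off of $K_i$ with respect to $\Sigma_H$. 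By Proposition~\ref{prop:lk_no}, the linking numbers $\lks(K_i,K_j)$ are invariant under this framed concordance. Since the class $[K_i^+] \in H_1(S^3-G) \cong \Z^n$ is determined by the linking numbers $\lk(K_i^+, K_j) = \lks(K_i,K_j)$ against the meridian basis, and since the identification $H_1(S^3-G)\cong H_1(S^3-H)\cong\Z^n$ respects this basis (both being dual to the meridians of the edges $e_i \notin E(T)$, with $T$ the common maximal tree of $\Gamma$), we conclude that $\widetilde{\mathbf K}(G,\Sigma_G) = \widetilde{\mathbf K}(H,\Sigma_H)$ under the canonical isomorphism $\symmhomology{G}\cong\symmhomology{H}=\symmhomology{\Gamma}$.

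Passing to the quotient $\Fr(\Gamma)$, which is intrinsic to $\Gamma$ as noted in the text, gives $\mathbf K(G) = \mathbf K(H)$. Finally, one should observe that the answer does not depend on the initial choice of $\Sigma_G$: changing $\Sigma_G$ changes $\widetilde{\mathbf K}(G,\Sigma_G)$ only by an element of the twist/edge-cut subgroup, which vanishes in $\Fr(\Gamma)$, and correspondingly changes the framing $\Sigma_H$ produced by Proposition~\ref{prop:framing_extend} in a compatible way.

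The main obstacle I expect is the bookkeeping in the second step: verifying that the framed concordance $\tilde f$ genuinely restricts to a \emph{framed} concordance of each constituent knot $K_i$, i.e. that the seam-smoothing used in Proposition~\ref{prof:const_knot_slice} can be carried out compatibly with the framing vector field of $\Sigma_G\times I$, so that the push-off of $K_i$ is tracked correctly across $S^3\times I$. This amounts to checking that the vector fields $\mathbf{\tilde v}_i$ constructed in the proof of Proposition~\ref{prop:framing_extend} restrict, along the sheet $e_i\times(0,1)$ and near each seam, to data that define the push-off of the smoothed curve $K_i$; once that is in place, the invariance of linking numbers and the identification of homology bases are essentially formal.
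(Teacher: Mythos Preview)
Your proposal is correct and follows exactly the same approach as the paper: choose an arbitrary framing $\Sigma_G$, use Proposition~\ref{prop:framing_extend} to promote the concordance to a framed one, invoke Proposition~\ref{prop:lk_no} to get $\widetilde{\mathbf K}(G,\Sigma_G) = \widetilde{\mathbf K}(H,\Sigma_H)$, and pass to the quotient $\Fr(\Gamma)$. The paper's proof is in fact considerably terser than yours; the additional bookkeeping you flag (well-definedness, seam-smoothing compatible with the framing) is not spelled out there either.
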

    \addtocounter{theorem}{-1}
}

\begin{proof}
    Choosing an arbitrary framing $\Sigma_G$ on $G$, we can obtain a framed concordance from $(G, \Sigma_G)$ to $(H, \Sigma_H)$ by applying Proposition~\ref{prop:framing_extend}. Then, Proposition~\ref{prop:lk_no} says that linking numbers between constituent knots are preserved. Therefore, $\widetilde{\mathbf K}(G, \Sigma_G) = \widetilde{\mathbf K}(H,\Sigma_H)$ and hence $\mathbf K(G) = \mathbf K(H)$.
\end{proof}

\begin{corollary}\label{cor:lk_application_slice}
    If a spatial graph $G$ is slice, then $\mathbf K(G) = 0$.
\end{corollary}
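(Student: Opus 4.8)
The plan is to read this off Theorem~\ref{thm:lk_application} together with a direct computation of $\mathbf K$ for a planar graph. By Definition~\ref{def:planar_slice}, $G$ being slice means there is a concordance from $G$ to some planar spatial graph $P$ with abstract topology $\Gamma$. Since $P$ and $G$ share the abstract topology $\Gamma$, they determine the same module of framings $\Fr(\Gamma)$, and Theorem~\ref{thm:lk_application} gives $\mathbf K(G) = \mathbf K(P)$. So the task reduces to showing $\mathbf K(P) = 0$.

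To compute $\mathbf K(P)$, I would use the freedom to pick any framing, since $\mathbf K(P)$ is the image in $\Fr(\Gamma)$, which is exactly $\symmhomology{\Gamma}$ modulo the twist and edge-cut classes and hence framing-independent. Choose the blackboard framing $\Sigma_P$, so that $P\subset S^2$ and the framing vector field points orthogonally to $S^2$, into one of the two complementary balls $B^3 \subset S^3$. Every constituent knot $K_i \subset P$ then has push-off $K_i^+$ lying in the interior of this ball $B^3$, which is disjoint from $P$. Therefore each $K_i^+$ is null-homologous in $S^3 - P$, i.e. $[K_i^+] = 0 \in H_1(S^3 - P)$ for all $i$ (equivalently, all $\lks(K_i,K_j) = 0$, the observation already recorded above). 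Hence $\widetilde{\mathbf K}(P,\Sigma_P) = 0 \in \symmhomology{\Gamma}$, and its image $\mathbf K(P) = 0 \in \Fr(\Gamma)$, so $\mathbf K(G) = 0$.

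I do not expect a genuine obstacle: this is a formal consequence of results already in hand, and the one point requiring care — that the blackboard framing of a planar graph has all push-offs bounding in the complement — is handled by the ball argument above. As an alternative one could bypass the explicit computation entirely and invoke Corollary~\ref{cor:lk_condition} directly, which already packages the vanishing of the relevant linking data for slice graphs; then one only needs to note that vanishing of all $\lks(K_i,K_j)$ for a suitable framing forces $\widetilde{\mathbf K}(G,\Sigma) = 0$ and hence $\mathbf K(G) = 0$.
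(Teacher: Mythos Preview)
Your proof is correct and follows the same approach as the paper: invoke Theorem~\ref{thm:lk_application} to reduce to showing $\mathbf K(P)=0$ for a planar $P$, then compute $\widetilde{\mathbf K}(P,\Sigma_P)=0$ using the blackboard framing. Your ball argument for why the push-offs are null-homologous in $S^3 - P$ is a slightly more explicit justification of what the paper takes for granted (having already observed earlier that all linking numbers vanish for the blackboard framing of a planar graph).
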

\begin{proof}
    By Theorem~\ref{thm:lk_application}, $\mathbf K(G) = \mathbf K(P)$ where $P$ is a planar spatial graph. By choosing $\Sigma_P$ to be the blackboard framing of $P$, we see that $\widetilde{\mathbf K}(P,\Sigma_P) = 0$ and hence $\mathbf K(P) = 0$.
\end{proof}

\begin{example}\label{ex:linking_ex1}
Consider the following examples. Let $\Theta$ be a $\theta$-curve with edge labels and orientations as in Figure~\ref{fig:theta_labels}. A planar embedding $P$ of $\Theta$ is shown, and we will use it to calculate $\Fr(\Theta)$. There are two consituent knots $K_1=v_1\cup v_0$, $K_2=v_2\cup v_0$, and four operations on framings: a full twist on each edge $v_i$ or a half twist on an edge cut $\{v_0,v_1,v_2\}$. Let $x$ be the meridian of $v_1$ and $y$ be the meridian of $v_2$ in a copy of $H_1(S^3-P)$ associated to $K_1$, and let $z$ and $w$ be meridians of $v_1$ and $v_2$ for a copy of $H_1(S^3-P)$ associated to $K_2$. Using this, we can compute $\Fr(\Theta)$:
\begin{align}
    \Fr(\Theta) & = \frac{\symmhomology \Theta}{\langle t_0,t_1,t_2,c_1\rangle} \nonumber\\
           & = \frac{\langle x,y,z,w\mid y-z\rangle}{\langle x+y+z+w, x, w, x +z + w\rangle} \nonumber\\
           & = 0. \label{eq:Fr_G_theta}
\end{align}
Therefore, for \emph{any} spatial $\theta$-curve $G$ we can find a framing such that all linking numbers $\lks(K_i,K_j)$ are zero. This fact is well known in the literature on concordance of $\theta$-curves \cite{taniyama1993cobordism}. 
\end{example}

\begin{example}\label{ex:linking_ex2}
For spatial graphs having a more complex topology, the invariant is non-zero. As a natural extension of $\theta$-curves, consider a family of spatial graphs $\Theta_n$, such that $\Theta_n$ is homeomorphic to an (unreduced) suspension on $n+1$ points. Then, spatial embeddings of $\Theta_1$ are knots, and spatial embeddings of $\Theta = \Theta_2$ are the usual $\theta$-curves discussed above in Example~\ref{ex:linking_ex1}. Using edge labels analogous to those in Figure~\ref{fig:theta_labels} and letting $x_j^{i}$ to be the meridian of $v_j$ associated to the constituent knot $K_i$, it is possible to calculate
\begin{align}
    \Fr(\Theta_n) & = \frac{\symmhomology{\Theta_n}}{\langle t_0,\ldots, t_n, c_1 \rangle} \nonumber \\
                  & = \frac{\langle x_1^{1}, x_2^{1},\ldots, x_1^{2},\ldots, x_n^{n} \mid x_i^{j} - x_j^{i} \text{ for $1\leq i < j \leq n$}\rangle}{\langle \sum_{i,j}x_i^{j}, x_1^{1},x_2^{2},\ldots, x_n^{n}, \sum_{i=1}^n\sum_{j=1}^i x_j^{i} \rangle}. \label{eq:Fr_G_Theta_n}
\end{align}
For $n>2$ there are more generators than relations in the abelian group $\Fr(\Theta_n)$, so it is nontrivial. By considering the Smith normal form of the matrix representing $\Fr(\Theta_n)$, we see that the module of framings for $\Theta_n$ is free, 
\begin{equation}
    \Fr(\Theta_n)\cong \Z^{\frac12 n(n-1) - 2}. 
\end{equation}
We will see an example of a spatial $\Theta_3$-graph in Example~\ref{ex:theta 3 slice} below.
\end{example}

\begin{figure}
    \centering
    \includegraphics[width=0.25\textwidth]{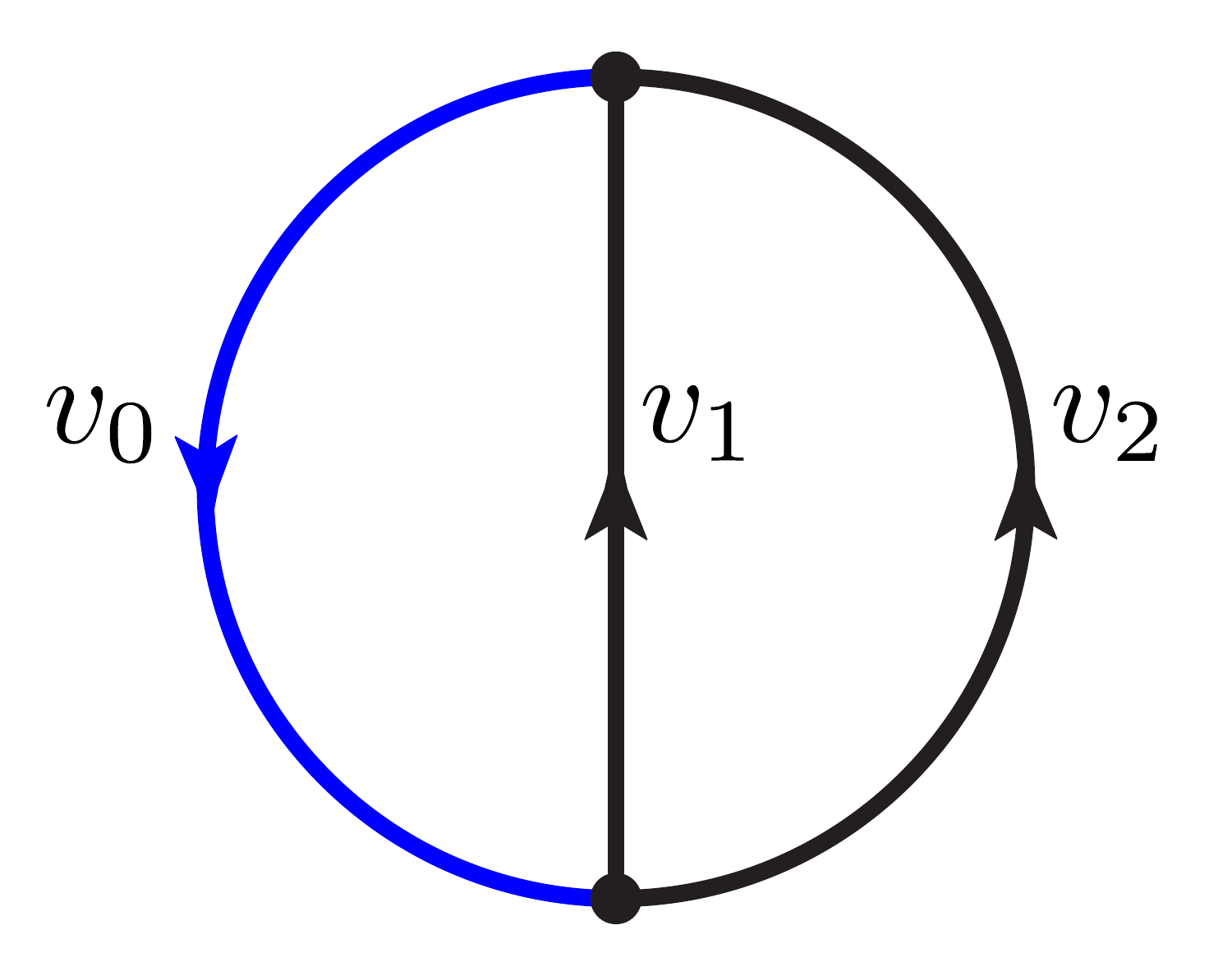}
    \caption{Edge labels and orientations for a planar $\theta$-curve. The maximal tree is a single edge shown in purple.} 
    \label{fig:theta_labels}
\end{figure}

\section{General link patterns}\label{sec:link_patt}

In this section we extend our consideration of linking numbers to a more general context of links ``in the neighborhood'' of a spatial graph $G$. In particular, we adapt Theorem~5 of \cite{taniyama1993cobordism} to general spatial graphs. 

\begin{definition}\label{def:link_pattern}
    Given a framed spatial graph $(G, \Sigma)$, a \emph{link pattern} is a collection of disjoint simple closed curves $\{L_1,\ldots,L_n\}$ in $\Sigma$.  
\end{definition}

Viewed as a submanifold of $S^3$, a link pattern defines a link. Recall that a link is called (strongly) slice if it bounds a collection of disjoint disks smoothly embedded in $B^4$. The link given by a link pattern can be used to give a condition on sliceness of $G$.

\begin{theorem}\label{thm:link_pattern}
    If $(G,\Sigma)$ is framed slice, then any link pattern $\{L_1,\ldots, L_k\}$ in $\Sigma$ is slice viewed as a link in $S^3$.
\end{theorem}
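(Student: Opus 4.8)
The plan is to restrict the given framed concordance to the link pattern, observe that it drags the link $L = L_1 \cup \cdots \cup L_k$ to a link lying in a planar surface (hence an unlink), and then cap off the resulting concordance annuli with the disjoint disks of the unlink to produce slice disks for $L$.

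First I would unwind the hypothesis. By Definition~\ref{def:framed_graph_equivalences}(c), $(G,\Sigma)$ framed slice means there is a concordance map $f\colon \Gamma\times I\to S^3\times I$ from $G$ to a planar graph $P$ that extends to a smooth embedding $\tilde f\colon \Sigma\times I\to S^3\times I$ with $\tilde f(\Sigma\times\{0\}) = \Sigma$ and $\tilde f(\Sigma\times\{1\})$ the blackboard framing $\Sigma_P$ of $P$; since the blackboard framing of a planar graph is a regular neighborhood of $P$ inside the standard $S^2\subset S^3$, we have $\Sigma_P\subset S^2$. After first isotoping the $L_i$ within the smooth surface $\Sigma$ so that they are smooth (which does not change the link type), the product $\bigsqcup_i (L_i\times I)$ is a disjoint union of smooth annuli properly embedded in $\Sigma\times I$, and by the ``from $g$ to $h$'' condition (Definition~\ref{def:map_properties}(a)) it is a vertical product near $t=0$ and $t=1$. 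Hence $A_i := \tilde f(L_i\times I)$ is a disjoint collection of smoothly embedded annuli in $S^3\times I$, vertical near both ends, realizing a smooth concordance from $L$ to $L' := \bigsqcup_i L'_i$ with $L'_i := \tilde f(L_i\times\{1\})\subset \Sigma_P\subset S^2$.

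Next I would identify $L'$: it is a disjoint union of smooth simple closed curves lying in the standard $2$-sphere, so it admits a crossingless planar diagram and is therefore the unlink. In particular it bounds a collection of disjoint smoothly embedded disks $D_1,\ldots,D_k$ in $B^4$ meeting $\partial B^4 = S^3$ orthogonally. Gluing $B^4$ to $S^3\times I$ along $S^3\times\{1\} = \partial B^4$ yields a manifold $W$ again diffeomorphic to $B^4$, with $\partial W = S^3\times\{0\}$. Since the $A_i$ are vertical near $S^3\times\{1\}$ and the $D_i$ meet $S^3$ orthogonally, each $A_i\cup_{L'_i} D_i$ is a smoothly embedded disk in $W$; these disks are disjoint and bound $L_i\subset\partial W$. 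Thus $L$ bounds disjoint smoothly embedded disks in a $4$-ball, i.e.\ $L$ is slice.

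The argument is short and the only substantive points are (a) a link lying in a surface embedded in the standard $S^2$ is an unlink, and (b) the smoothness bookkeeping where the concordance annuli are capped off at $S^3\times\{1\}$. I expect (b) to be the place requiring the most care, but it is handled automatically by the product structure near the ends built into the definition of a concordance: verticality of the $A_i$ near $t=1$ matches the orthogonal collar of the slice disks $D_i$, so the glued-up surfaces are genuinely smooth. Neither point is deep, so I anticipate no serious obstacle.
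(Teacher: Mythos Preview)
Your proof is correct and follows essentially the same approach as the paper: restrict the framed concordance $\tilde f\colon \Sigma\times I\to S^3\times I$ to the link pattern to obtain a link concordance to curves lying in $\Sigma_P\subset S^2$, hence to an unlink, which is slice. The paper's proof is a one-sentence version of this; your additional care with smoothing the $L_i$ and matching collars at $t=1$ fills in details the paper leaves implicit.
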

\begin{proof}
This follows quickly from observing that as $(G,\Sigma)$ is concordant to $(P,\Sigma_0)$ with $\Sigma_0\subset S^2 \subset S^3$, we can restrict the framed graph concordance to the link pattern to obtain a strong link concordance to a collection of unlinked circles in $S^2 \subset S^3$, which is clearly slice.
\end{proof}

For a specific link pattern it is possible to obtain a converse result. We begin with some necessary graph-theoretic definitions.
\begin{definition}\label{def:fnd_cycle}
    Given a graph $G$, a \emph{cycle basis} is a set of simple cycles forming a basis of $H_1(G,\Z_2)$. A cycle basis is called \emph{fundamental} if it is obtained from a maximal tree $T\subset G$ by associating to each edge in $e\in E(G-T)$ a cycle $e\cup P(e)$ where $P(e)\subset P$ is the unique path between enpoints of $e$ through $T$.
\end{definition}
We have already encountered this notion in Section~\ref{subsec:linking_homology}. It is known that a given cycle basis being fundamental is equivalent to each element of a basis possessing an edge not in any other elements \cite[Theorem 1]{syslo1979cycle}. We also observe that any fundamental link pattern must consist of $k = b_1(G)$ connected components. With this, we are able to make the definition of a fundamental link pattern for a framed spatial graph.

\begin{definition}\label{def:fnd_pattern}
    Let $\Sigma$ be a framing surface for some spatial graph $G$. A \emph{fundamental link pattern} $\{L_1,\ldots, L_k\}$ is defined to be a link pattern such that 
    \begin{enumerate}[label=(\alph*)]
        \item Each component $L_i$ maps to a simple cycle of $G$ under the deformation retraction of $\Sigma$ to $G$.
        \item Viewed as cycles, its components form a cycle basis with each element possessing an edge not belonging to other cycles.
    \end{enumerate}
\end{definition}

\begin{theorem}\label{thm:fnd_link_pattern}
    Let $(G,\Sigma)$ be a framed spatial graph. Then $(G,\Sigma)$ is framed slice if and only if a fundamental link pattern $\{L_1,\ldots, L_k\}$ in $\Sigma$ is slice.
\end{theorem}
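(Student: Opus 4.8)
The plan is to prove the nontrivial direction (slice $\Leftarrow$ link pattern slice), since the forward direction is already Theorem~\ref{thm:link_pattern}. The strategy is to reconstruct the framing surface $\Sigma$, and then a concordance of $(G,\Sigma)$ to a planar graph, from a slice-disk system for the fundamental link pattern. The key geometric observation is that condition (b) in Definition~\ref{def:fnd_pattern}, together with the known characterization \cite[Theorem 1]{syslo1979cycle}, means that for each $i$ there is an edge $e_i$ belonging only to $L_i$; these are exactly the non-tree edges $E(G)-E(T)$ for the maximal tree $T$ that generates the fundamental cycle basis. I would fix such a $T$ at the outset.

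First I would build $\Sigma$ out of the link pattern. Take a collection $\{D_1,\ldots,D_k\}$ of disjoint disks in $B^4$ with $\partial D_i = L_i$, realizing the hypothesized sliceness. Push these into $S^3\times I$ so that $D_i$ becomes an embedded annulus-like concordance; more precisely, thicken each $L_i\subset\Sigma$ to a band $b_i\subset\Sigma$ following the edge $e_i$, and realize the slice disks as a concordance of the link pattern to the unlink in $S^2$. The core point is that the bands $b_i$ together with the subsurface of $\Sigma$ over the tree $T$ (which is just a disk-with-bands lying over a tree, hence isotopic to a standard planar piece) assemble into all of $\Sigma$. So a concordance of the link pattern that is ``compatible'' with the tree part extends to a concordance of the whole framing surface $\Sigma\times I$. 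Carrying $G$ along as the deformation retract core of $\Sigma\times I$ produces an unframed concordance $\Gamma\times I\to S^3\times I$; I would check it is a rigid embedding by verifying the local models in Definition~\ref{def:map_properties}(c) near the seams $v\times I$, using the fixed planar structure of framings near vertices noted after Definition~\ref{def:framed_graph_equivalences}.

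The second ingredient is making the target planar. After the concordance above, $G$ has been carried to a graph $G'$ whose framing surface $\Sigma'$ restricts over $T$ to a standard planar disk-with-bands and over each $e_i$ to a band attached along a standard unknotted, unlinked pair of arcs (since the $L_i$ end at the unlink in $S^2$). The remaining freedom is twisting on edges: $\Sigma'$ may differ from the blackboard framing of a planar $P$ by full twists on edges and half-twists along edge cuts, exactly the operations of Proposition~\ref{prop:all_framings}. Here is where I would need to invoke the hypothesis that $G$ is being shown framed slice with respect to \emph{some} framing — rereading the statement, $(G,\Sigma)$ framed slice allows the target framing to be the blackboard one, so a twisted framing on a planar graph is still framed concordant to the blackboard-framed planar graph (twists on a planar surface can be undone in $S^3\times I$ by finger moves, since each twist region bounds a disk). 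Composing the two concordances gives $(G,\Sigma)$ framed concordant to $(P,\Sigma_P)$, i.e.\ framed slice.

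The main obstacle is the extension step: showing that a slice-disk system for $\{L_1,\ldots,L_k\}$ can be chosen (or modified) to be compatible with the tree portion of $\Sigma$ so that it genuinely extends to a concordance of $\Sigma\times I$ rather than merely of the curves $L_i$. The link pattern sees only the homotopy type of $\Sigma$ through its core cycles, so a priori the slice disks know nothing about how the bands of $\Sigma$ are glued along $T$; the argument must use that the $e_i$-bands are disjoint and that the $T$-part of $\Sigma$ is planar-standard (isotopic to a neighborhood of a tree in $S^2$) to fill in the rest of the surface over the concordance without creating intersections. I expect this to require a careful handle-by-handle construction: start from $T\times I$ (trivially concordable, it is a disk), then attach the $2$-handles corresponding to the $e_i$ using the slice disks $D_i$ as cores, checking embeddedness at each stage because the $D_i$ are disjoint and meet $S^3\times\{0\}$ exactly in the $L_i$. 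Once embeddedness of the resulting $\Sigma\times I$ is established, carrying along the retract core and smoothing the seams as in Proposition~\ref{prof:const_knot_slice} is routine.
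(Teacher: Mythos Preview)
Your approach differs substantially from the paper's, and the gap you yourself flag in the last paragraph is real and is not closed by the sketch you give.

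The paper does not attempt to extend the link concordance to an embedding of $\Sigma\times I$. Instead it works on the \emph{graph}: starting from $G\subset\Sigma$, it performs $k-1$ band surgeries on $G$, all of them taking place inside $\Sigma$ and guided by the maximal tree $T$, so that $G$ is transformed into a disjoint union of $k-1$ circles $S_1,\ldots,S_{k-1}$ (isotopic in $\Sigma$ to $L_1,\ldots,L_{k-1}$) together with one component $G'\cong G$ that has been squeezed into a thin annular neighbourhood of $L_k$ inside $\Sigma$. Now the given concordance of $\{L_i\}$ to the unlink is applied verbatim to $S_1,\ldots,S_{k-1}$, while $G'$, sitting inside a tubular neighbourhood of $L_k$, is simply dragged along by thickening the concordance of $L_k$ to $L_k\times D^2\times I$. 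The $k-1$ resulting planar circles are then capped off with disks (undoing the band moves), leaving a planar copy of $G$; finally Proposition~\ref{prop:framing_extend} supplies the framing.

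In your route the problematic step is exactly the one you identify. The slice disks $D_i$ bound the \emph{cycles} $L_i$, not the bands over the non-tree edges $e_i$, and there is no mechanism forcing the slice concordance to respect a decomposition of $\Sigma$ into a tree-disk plus $e_i$-bands; ``attach $2$-handles with the $D_i$ as cores'' does not give an embedded $\Sigma\times I$, because the $L_i$ run through the tree part of $\Sigma$ and the $D_i$ know nothing about how the tree-disk sits relative to them in $S^3\times I$. The paper's band-surgery manoeuvre avoids this entirely: after the surgeries the graph \emph{is} the link pattern (plus one component trapped in a tube around $L_k$), so the link concordance applies without any extension argument. Your final remark about undoing residual twists via Proposition~\ref{prop:all_framings} is also not right as stated: a full twist on an edge changes $\widetilde{\mathbf K}$, which is a framed-concordance invariant by Proposition~\ref{prop:lk_no}, so a twisted framing of a planar graph is in general \emph{not} framed concordant to the blackboard one.
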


%In the figures and in the proof below we consider a surface $\Sigma$ embedded in a plane when defining a link pattern. The discussion can then be transferred directly to any other embedding of $\Sigma$.

\begin{proof}
    The ``only if'' direction a restatement of Theorem~\ref{thm:link_pattern}. To show the opposite direction, we will construct a concordance for $G$ from a known concordance of the fundamental link pattern $(L_1\cup\cdots\cup L_k) \times I$. In this process, we will utilize the framing surface $\Sigma$, but will construct an unframed concordance for $G$. The outline is as follows:
    \begin{enumerate}[label=(\roman*)]
        \item For $0\leq t\le 1/4$ the concordance is simply the identity on $G$ and $\Sigma$.
        \item For $1/4\leq t\leq 1/2$ the surface $\Sigma$ is still kept constant, and $k-1$ bands are attached to edges of $G$ such that \begin{itemize}
            \item all bands and components of a graph lie inside $\Sigma$;
            \item after all $k-1$ bands are attached, the graph has $k-1$ components $S_1,\ldots, S_{k-1}$ homeomorphic to $S^1$ and one component homeomorphic to $G$, which we call the \emph{graph-component} $G'$;
            \item as curves within $\Sigma$, $S_i$ is isotopic (in $\Sigma$) to $L_i$ -- the $ith$ component of the fundamental link pattern, and the graph-component $G'$ can be isotoped to be located inside a small neighborhood of $L_k$ and is not contractible in $\Sigma$.
        \end{itemize}
        \item For $1/2\leq t\leq 3/4$ the concordance of the fundamental link pattern is executed in a straightforward way on the first $k-1$ ``circle'' components, and with a modification accounting for additional trivial arcs on the graph-component.
        \item Finally, for $3/4\leq t\leq1$ the resulting $k-1$ disjoint circles are capped off with 2-disks such that only the planar embedding of $G$ remains. We have shown that $G$ is slice.
    \end{enumerate}
    To finish the proof we need to describe the band attachment process and the modification to the link concordance. 

    Given a fundamental link pattern, we get a fundamental cycle basis of $G$ and hence a maximal tree $T\subset G$ (the intersections of all those cycles). Then, the goal is to attach bands to edges not in $T$. The bands are to be attached in small neighborhoods of the vertices and should closely follow the maximal tree.  The process is illustrated with an example in Figure~\ref{fig:link_pattern_proof_ex_1}. 
    When $G$ has vertices of higher valence, the bands are attached first to the edges most adjacent to $T$ outwards (Figure~\ref{fig:link_pattern_proof_ex_2}).

    We claim that after attaching $k-1$ bands and some necessary isotopies the graph now looks exactly as described in step (ii) above. This is clear after we realize that attaching bands as described in Figures~\ref{fig:link_pattern_proof_ex_1} and~\ref{fig:link_pattern_proof_ex_2} corresponds exactly to producing a fundamental cycle given a maximal tree.

    Finally, given the graph in the form described above at $t=1/2$, the concordance of the link pattern can be ``applied'': graph concordance follows the link concordance for $L_1, \ldots, L_{k_1}$. For a component of a fundamental link pattern homeomorphic to the graph, the concordance $L_k\times I$ can be extended to a tubular neighborhood $L_k\times D^2\times I$ and hence to a graph $G'$ that lies in a neighborhood of $L_k$ by construction.

    At this point, we have found a concordance from $G$ to some planar spatial graph $P$. As $G$ is framed, we can extend the framing to the whole concordance $G\times I$ using Proposition~\ref{prop:framing_extend} to finish the proof.
\end{proof}

\begin{figure}
    \centering
    \includegraphics[width=0.7\textwidth]{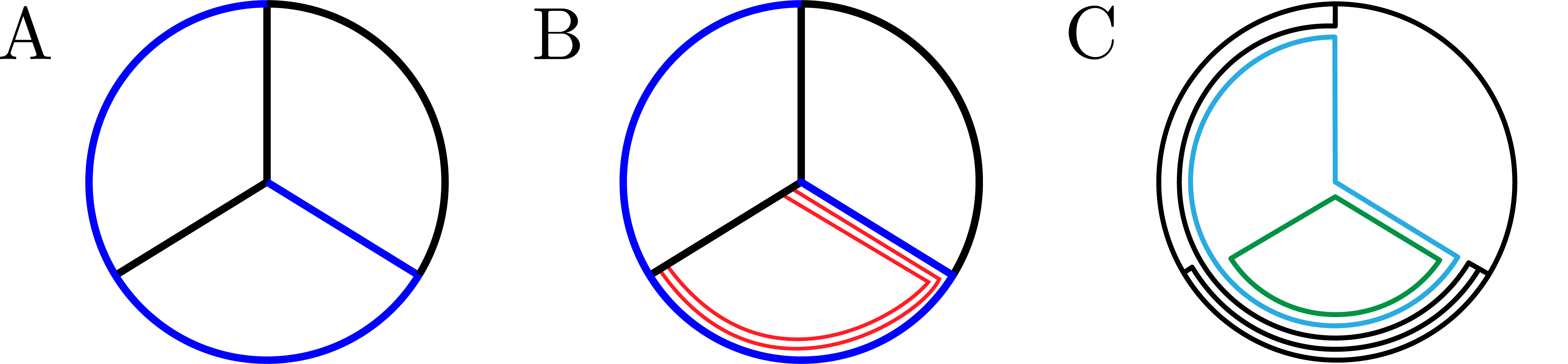}
    \caption{An example process of attaching bands for stage (ii) of the concordance in the proof of Theorem~\ref{thm:fnd_link_pattern}. (A) Initial graph $G$. The surface $\Sigma$ is just a neighborhood of $G$ in the plane, and the maximal tree (determined by the fundamental cycle basis) is shown in purple. (B) The first band (shown in red) is attached. (C) A final picture after all bands are attached.} 
    \label{fig:link_pattern_proof_ex_1}
\end{figure}

\begin{figure}
    \centering
    \includegraphics[width=0.7\textwidth]{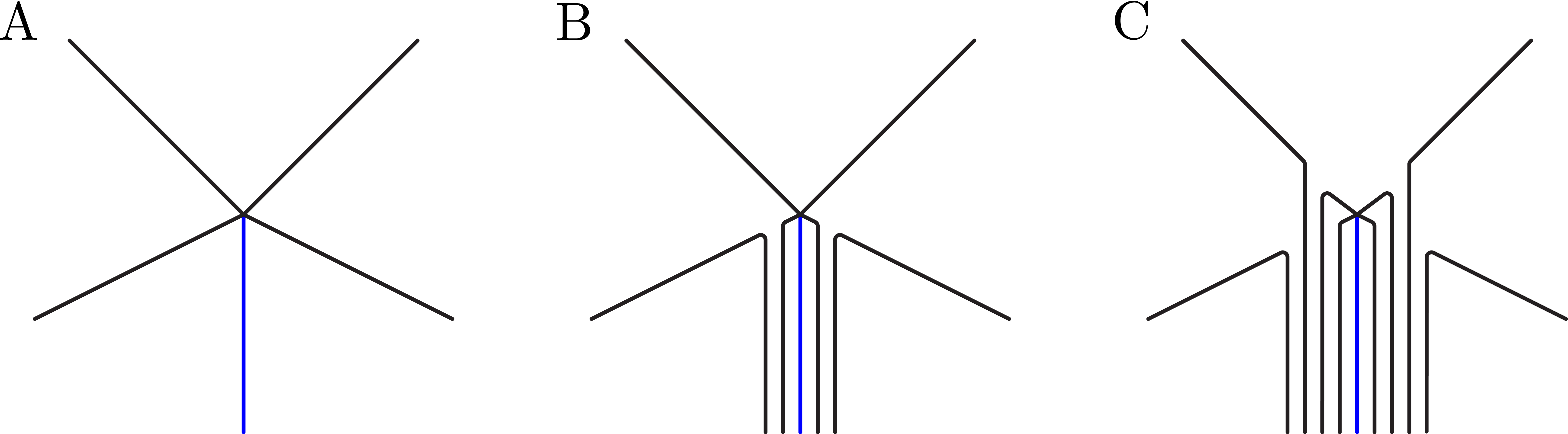}
    \caption{Attaching bands to $G$ around a vertex of valence five.}
    \label{fig:link_pattern_proof_ex_2}
\end{figure}

\section{Main Theorem}

Bringing together the results in Sections~\ref{sec:lk} and \ref{sec:link_patt}, we are able to reduce sliceness of spatial graphs to a statement about framings together with sliceness of links.

{
    \renewcommand{\thetheorem}{\ref{thm:main}}
    \begin{theorem} 
        Let $G$ be a spatial graph with abstract topology $\Gamma$, Let $\{K_i\}$ be its constituent knots constructed from a maximal tree as in Section~\ref{subsec:linking_homology}, and $\Sigma$ an arbitrary framing of $G$. Then, $G$ being slice is equivalent to the combination of the following two conditions:
        \begin{enumerate}[label=(\roman*)]
            \item The images of push-offs of constituent knots are zero in the module of framings of $G$, $\mathbf K(G) = 0 \in \Fr(\Gamma)$;
            \item if (i) is true, then there is a framing $\Sigma_0$ such that each for push-off we have $[K_i^+] = 0 \in H_1(S^3- G)$. The second condition then is: a fundamental link pattern in $\Sigma_0$ is slice.
        \end{enumerate}
    \end{theorem}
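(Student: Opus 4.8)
The plan is to assemble Theorem~\ref{thm:main} from the pieces already in hand, treating it essentially as a bookkeeping statement that organizes Corollary~\ref{cor:lk_application_slice}, Proposition~\ref{prop:framing_extend}, and Theorem~\ref{thm:fnd_link_pattern} into a single equivalence. First I would prove the forward implication: if $G$ is slice, then Corollary~\ref{cor:lk_application_slice} gives $\mathbf K(G) = 0 \in \Fr(\Gamma)$, which is condition~(i). Assuming (i), I need to produce the framing $\Sigma_0$ demanded by condition~(ii). Here the point is that $\mathbf K(G) = 0$ in $\Fr(\Gamma)$ means $\widetilde{\mathbf K}(G,\Sigma)$ lies in the subgroup $\langle T_1,\dots,T_k, C_1,\dots,C_r\rangle$ of $\symmhomology{G}$ generated by the full-twist and half-twist-on-edge-cut vectors; by Proposition~\ref{prop:all_framings}, every such vector is realized by an actual modification of the framing surface (a full twist on an edge, or a half-twist on each edge of an edge cut), so there is a genuine framing $\Sigma_0$ of $G$ for which $\widetilde{\mathbf K}(G,\Sigma_0) = 0 \in \symmhomology{G}$, i.e.\ $[K_i^+] = 0 \in H_1(S^3 - G)$ for every $i$. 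It then remains to see that a fundamental link pattern in this $\Sigma_0$ is slice: since $G$ is slice, Proposition~\ref{prop:framing_extend} lets us upgrade the unframed concordance from $G$ to a planar $P$ into a \emph{framed} concordance $(G,\Sigma_0) \to (P,\Sigma_H)$ for \emph{some} framing $\Sigma_H$ of $P$; but $\widetilde{\mathbf K}$ is a framed-concordance invariant (Proposition~\ref{prop:lk_no}), so $\widetilde{\mathbf K}(P,\Sigma_H) = \widetilde{\mathbf K}(G,\Sigma_0) = 0$, which forces $\Sigma_H$ to be (equivalent to) the blackboard framing — hence $(G,\Sigma_0)$ is framed slice, and Theorem~\ref{thm:link_pattern} gives that the fundamental link pattern in $\Sigma_0$ is slice.

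For the reverse implication, suppose both (i) and (ii) hold. By (i) and the same application of Proposition~\ref{prop:all_framings} as above, there is a framing $\Sigma_0$ with $[K_i^+] = 0$ for all $i$; by (ii), a fundamental link pattern $\{L_1,\dots,L_k\}$ in $\Sigma_0$ is slice. Then Theorem~\ref{thm:fnd_link_pattern} applied to $(G,\Sigma_0)$ immediately yields that $(G,\Sigma_0)$ is framed slice, and in particular $G$ is slice. The only subtlety is to check that the $\Sigma_0$ produced in step (i) can be chosen to be the \emph{same} surface in which the fundamental link pattern of (ii) lives — but this is built into the phrasing of (ii), which asserts the existence of a single $\Sigma_0$ with both the homological vanishing and the slice link pattern; so the two halves dovetail directly.

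The main obstacle is the passage, in the forward direction, from "$\mathbf K(G) = 0$ in the \emph{quotient} $\Fr(\Gamma)$" to "there exists an honest framing $\Sigma_0$ of $G$ with $[K_i^+] = 0$ in $H_1(S^3 - G)$ for all $i$." This requires knowing that the generators $T_j$ and $C_j$ of the subgroup we quotient by are not merely formal homology vectors but are each \emph{geometrically realized} by an allowable modification of a framing surface, and that these modifications can be composed; this is exactly the content of Proposition~\ref{prop:all_framings}, applied now to $G$ rather than to a planar graph (the paper remarks after that proposition that planarity was inessential). A secondary point to be careful about is the claim that a framing of a planar graph $P$ with $\widetilde{\mathbf K}(P,\cdot) = 0$ must be the blackboard framing: this follows because $\Fr(\Gamma)$ is built precisely to quotient out the ambiguity among framings, so $\widetilde{\mathbf K}(P,\Sigma_H) = 0 = \widetilde{\mathbf K}(P,\Sigma_P)$ together with the fact that framings differ by twist/edge-cut moves (Proposition~\ref{prop:all_framings}) and that those moves act freely enough on push-off classes forces $\Sigma_H$ to differ from $\Sigma_P$ only by moves that do not change sliceness of the associated link pattern — so for the purpose of invoking Theorem~\ref{thm:link_pattern} we may as well take $\Sigma_H = \Sigma_P$. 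I would state and use this last point as a short lemma rather than belaboring it inline.
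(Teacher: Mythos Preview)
Your reverse direction is correct and is exactly what the paper does: condition~(i) plus Proposition~\ref{prop:all_framings} produces $\Sigma_0$, and then Theorem~\ref{thm:fnd_link_pattern} applied to $(G,\Sigma_0)$ finishes.

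In the forward direction, however, you introduce an unnecessary detour that leaves a genuine gap. You first build $\Sigma_0$ on $G$ by twist moves, then push the framing \emph{forward} along the concordance to obtain some $(P,\Sigma_H)$, and finally try to argue that $\widetilde{\mathbf K}(P,\Sigma_H)=0$ forces $\Sigma_H$ to be the blackboard framing (or at least ``close enough'' for sliceness of link patterns). That last step is not justified: the map from framings of $P$ to $\symmhomology{\Gamma}$ need not be injective, so $\widetilde{\mathbf K}(P,\Sigma_H)=0$ does \emph{not} in general pin down $\Sigma_H$ up to equivalence, and your fallback claim that the residual ambiguity ``does not change sliceness of the associated link pattern'' is asserted but not proved. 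You flag this yourself as a ``short lemma,'' but you give no indication of how to prove it, and it is not obvious.

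The paper avoids the issue entirely by running Proposition~\ref{prop:framing_extend} in the \emph{other} direction: start from the planar end $(P,\Sigma_P)$ with its blackboard framing and extend the framing \emph{back} along the concordance to obtain $\Sigma_0$ on $G$. Then $(G,\Sigma_0)$ is framed slice \emph{by construction}, Proposition~\ref{prop:lk_no} gives $[K_i^+]=0$ automatically, and Theorem~\ref{thm:link_pattern} yields condition~(ii) with no further work. Reorganize your forward direction this way and the gap disappears.
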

    \addtocounter{theorem}{-1}
}

\begin{proof}
    If $G$ is slice, Corollary~\ref{cor:lk_application_slice} gives condition (i). Using Proposition~\ref{prop:framing_extend} to frame the concordance from $G$ to a planar graph $P$ with its blackboard framing, we see that Theorem~\ref{thm:link_pattern} gives condition (ii).

    Conversely, condition (i) on constituent knots allows us apply Proposition~\ref{prop:all_framings} to find a sequence of operations by which any framing $\Sigma$ can be turned into $\Sigma_0$, making all linking numbers identically zero. Condition (ii) gives the hypothesis for Theorem~\ref{thm:fnd_link_pattern}, and that theorem finishes the proof in the opposite direction.
\end{proof}
As the $\theta$-curve in Example~\ref{ex:linking_ex1} shows, for some graph topologies condition (i) is always satisfied, and concordance of a graph is determined by concordance of a certain link.

\begin{example}
    Let $H$ be a spatial graph in Figure~\ref{fig:handcuff}, having an abstract topology $\Phi$ which we call a ``handcuff.'' The homology of the complement of $H$ is $H_1(S^3 - H) \cong \Z^2$ as there are two constituent knots formed by edges $v_1$ and $v_2$. The bridging edge $v_0$ forms a single possible edge cut, but as $v_0$ does not intersect with any of the constituent knots, this edge cut does not affect the linking numbers. If $x_i$ are meridians of $v_i$ in the first copy of $H_1(S^3 - H)$ and $y_i$ are meridians of $v_i$ in the second copy, we have
    \[
        \Fr(\Phi) = \frac{\langle x_1,x_2,y_1,y_2 \mid x_2 - y_1 \rangle}{\langle x_1,y_2\rangle} =  \langle y_1\rangle = \Z.
    \]
    As the two constituent knots are meridians of each other, we have $\mathbf K(H) = (y_1) \neq 0 \in \Fr(\Phi)$, so $H$ is not slice. Indeed, sliceness of $H$ would imply sliceness of the Hopf link, which is false.
\end{example}

\begin{example} Let us consider $\theta$-curves once again. As we have seen in Example~\ref{ex:linking_ex2}, the linking number invariant vanishes for these spatial graphs. Theorem~\ref{thm:fnd_link_pattern} was shown for $\theta$-curves by K. Taniyama in \cite{taniyama1993cobordism}. As an illustration, consider the spatial graph $G$ in Figure~\ref{fig:theta_5_1_example}A which is commonly labeled as $5_1$ \cite{moriuchi2009table}. We see that every constituent knot of $G$ is an unknot, and that there exists a framing $\Sigma_0$ of $G$ making all linking numbers $\lks(K_i,K_j)$ zero (such framing is blackboard except for a negative half-twists on the central vertical edge). Figure~\ref{fig:theta_5_1_example}B shows a fundamental linking pattern $L$ in $\Sigma_0$. The link $L$ has signature $-1$, therefore it is not slice, therefore $G$ is not slice.
\end{example}

\begin{example}\label{ex:theta 3 slice}
As a converse example in which a fundamental link pattern can prove sliceness of a spatial graph, we consider a graph $H$ in Figure~\ref{fig:lpat_slice_example}(A) having an abstract topology of $\Theta_3$. The group $\Fr(\Theta_3)\cong \Z$ is no longer trivial, but luckily the invariant $\mathbf K(H)$ can be computed to be zero using a blackboard framing $\Sigma_0$ of $H$. A fundamental link pattern $L$ in $\Sigma$ is shown in Figure~\ref{fig:lpat_slice_example}B, and it can be seen that $L$ is isotopic to a trivial link. Theorem~\ref{thm:fnd_link_pattern} then implies that $H$ is slice.
\end{example}

\begin{remark}
    Theorem~\ref{thm:main} allows us to import concordance invariants of knots to obstruct sliceness of spatial graphs. For example, if the fundamental link pattern fails the Fox-Milnor condition on the Alexander polynomial, then the spatial graph is not slice. By contrast, there is a version of the Alexander polynomial for spatial graphs which can be defined, for example, through the fundamental group of the graph complement \cite{mellor2018invariants}. The Alexander polynomial for slice spatial graphs does not need to satisfy the Fox-Milnor condition, as the example in Figure~4 of \cite{mellor2018invariants} shows.
\end{remark}

\begin{figure}
    \centering
    \includegraphics[width=0.2\textwidth]{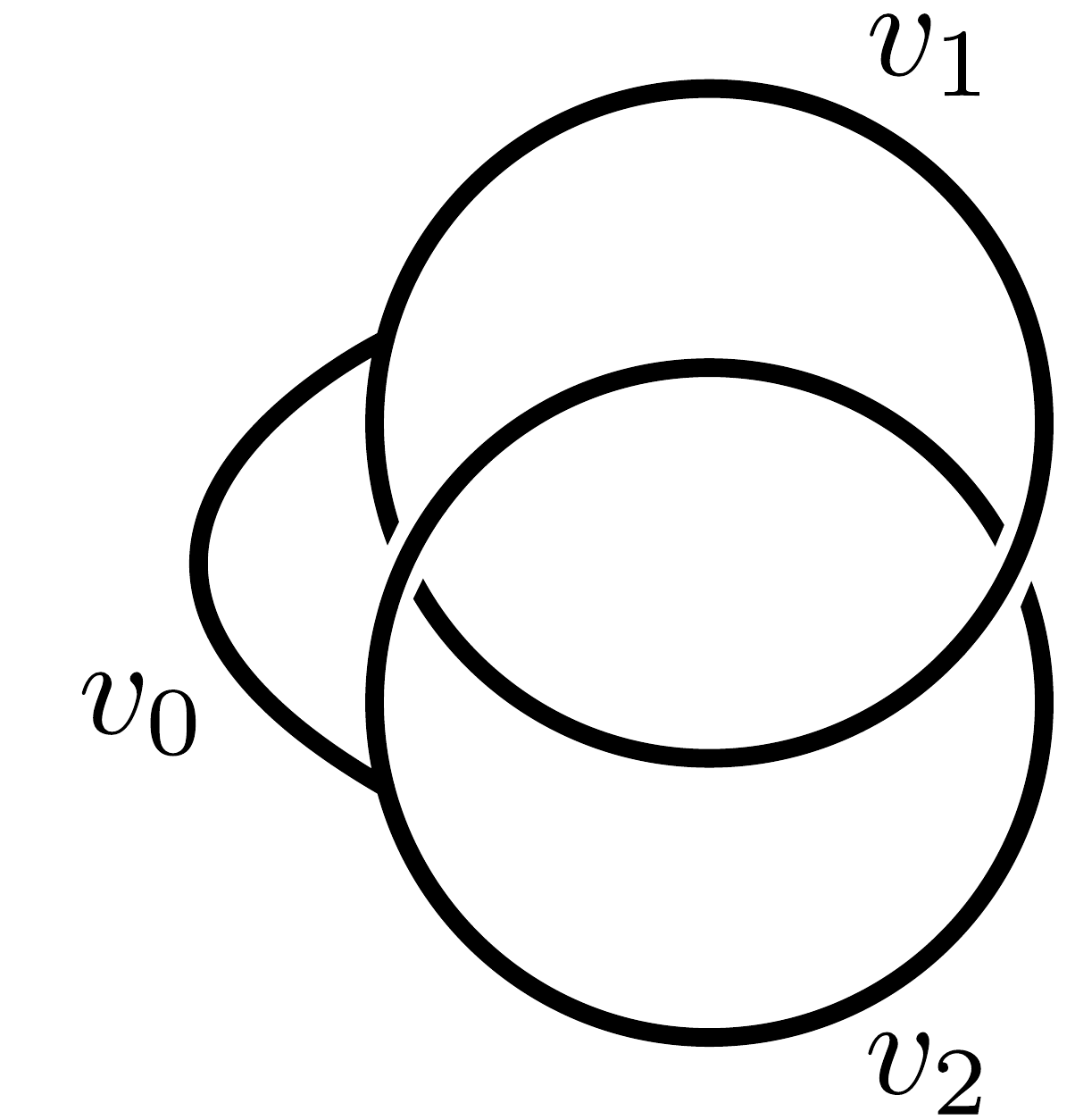}
    \caption{A handcuff spatial graph $H$.}
    \label{fig:handcuff}
\end{figure}

\begin{figure}
    \centering
    \includegraphics[width=0.48\textwidth]{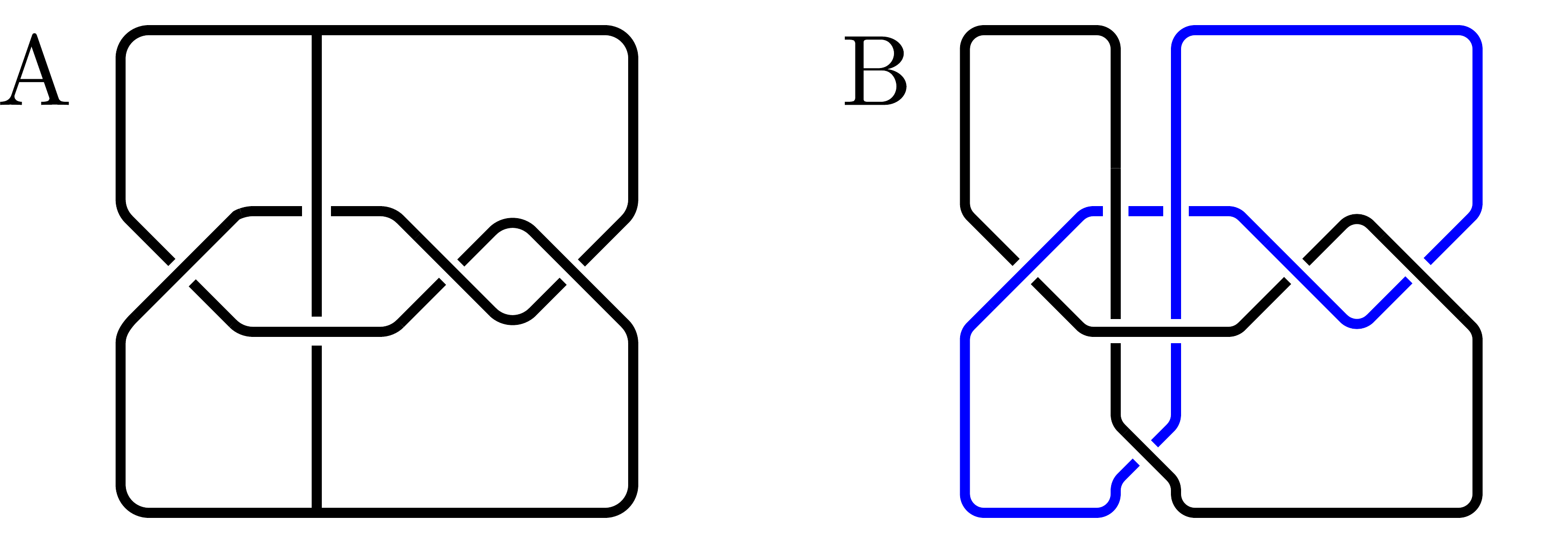}
    \caption{(A) A $\theta$-curve $G$ with five crossings. (B) A fundamental link pattern in the framing $\Sigma_0$ of $G$.}
    \label{fig:theta_5_1_example}
\end{figure}

\begin{figure}
    \centering
    \includegraphics[width=0.65\textwidth]{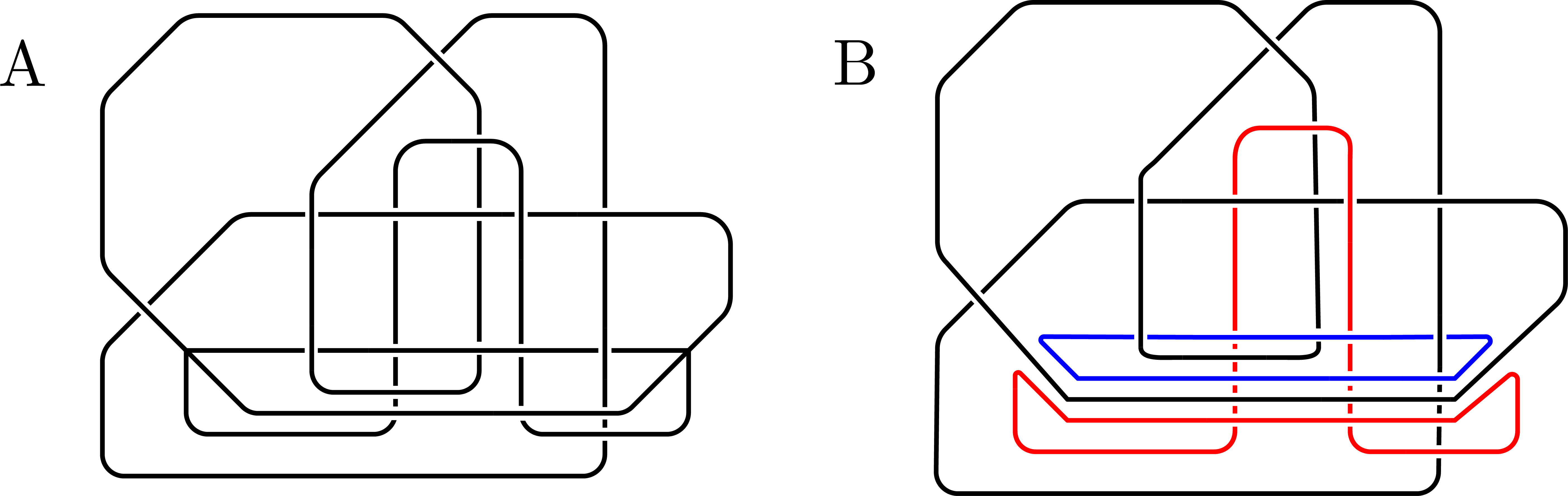}
    \caption{(A) A spatial graph $H$ with abstract topology $\Theta_3$. (B) A fundamental link pattern in the framing $\Sigma_0$ of $H$.}
    \label{fig:lpat_slice_example}
\end{figure}

\bibliographystyle{amsplain}
\bibliography{main.bib}

\end{document}